%


\documentclass{SCAEOL}
\numberwithin{equation}{section}
\usepackage{tkz-euclide}

\newcommand{\RR}{{\mathbb R}}

\newcommand{\vv}{{\mathbf v}}
\newcommand{\cc}{{\mathbf c}}

\usepackage{float}
\restylefloat{figure}
\begin{document}

\Year{2013} %
\Month{January}
\Vol{56} %
\No{1} %
\BeginPage{1} %
\EndPage{XX} %
\AuthorMark{Dehbi L  {\it et al.}}

\title{Algebraic Curve Interpolation for Intervals via Symbolic-Numeric Computation}{}


\author[1]{Lydia Dehbi}{}
\author[1]{Zhengfeng Yang}{Corresponding author}
\author[1]{Chao Peng}{}
\author[2]{Yaochen Xu}{}
\author[2]{Zhenbing Zeng}{}

\address[{\rm1}]{School of Software Engineering, East China Normal University, Shanghai {\rm 200062}, China;}
\address[{\rm2}]{Department of Mathematics, Shanghai University, Shanghai {\rm 200444}, China;}
\Emails{dehbilydia@sei.ecnu.edu.cn,
zfyang@sei.ecnu.edu.cn, cpeng@sei.ecnu.edu.cn, xuyaochen@sibcb.edu.cn,
zbzeng@shu.edu.cn}\maketitle


 {\begin{center}
\parbox{14.5cm}{\begin{abstract}
Algebraic curve interpolation is described by specifying the location of
$N$ points in the plane and constructing an algebraic curve of a function
$f$ that should pass through them. In this paper, we propose a novel approach to construct the algebraic curve that interpolates a set of data (points or neighborhoods).
This approach aims to search the polynomial with the smallest degree interpolating the given data. Moreover, the paper also presents an efficient method to reconstruct the algebraic curve of integer coefficients with the smallest degree and the least monomials that interpolates the provided data. The problems are converted into optimization problems and are solved via Lagrange multipliers methods and symbolic computation. Various examples are presented to illustrate the proposed approaches.

\vspace{-3mm}
\end{abstract}}\end{center}}

 \keywords{Algebraic Curve, Mathematics Mechanization, Symbolic-Numeric Computation, Sparse interpolation.}

 \MSC{03C40, 65D05, 65D10}

\renewcommand{\baselinestretch}{1.2}

\baselineskip 11pt\parindent=10.8pt  \wuhao

\section{Introduction}
Polynomial interpolation is a rather old mathematical problem \cite{gasca2001history} and has significant applications in various practical engineering problems \cite{bagheri2017self,feng2018quasi,nunez2018accelerated,dell2021numerical,dell2021solving,dell2022numerical,meletiou2003cryptography,9805746}. It has become a fundamental research tool in approximation theory and numerical analysis \cite{forsythe1977computer,fausett2003numerical,datta2004numerical}. Compared to univariate polynomial interpolation, the study of multivariate polynomial interpolation and implicit function interpolation is relatively new. In recent years, many scholars have focused on these topics and proposed new algorithms \cite{de1990multivariate,errachid2020rmvpia,essanhaji2022lagrange,gasca2000polynomial,neidinger2009multivariable, neidinger2019multivariate,varsamis2014optimal,varsamis2014newton}.

In the study of general interpolation problems, researchers seek to construct functions of the form $y=f(x_1,x_2,\ldots,x_n)$ that minimize the overall deviation from the given data (such as \cite{varsamis2014optimal,varsamis2014newton}). In contrast to existing research, the problem discussed in this paper is as follows: the given data consists of small neighborhoods in space (such as square or spherical neighborhoods), and the goal of interpolation is to construct polynomial implicit functions that pass through these small neighborhoods. The measure of interpolation quality is not the deviation between the interpolated function and the known data, but rather the properties of the degree and coefficients of the constructed polynomial. For example, the goal is to minimize the degree of the polynomial and ensure that the coefficients are integers and have the smallest possible absolute values. Implicit function interpolation problems, where $f(x_i,y_i) = 0, 1 \leq i \leq N$, seek the function $f(x,y)$ whose curve passes through these points or satisfies $f(x_i,y_i) \approx 0$. Such problems often arise in engineering or experimental data processing, such as constructing empirical formulas based on experimental data (as seen in \cite{Zhang,Gimenez}) or problems in computer graphics that involve reconstructing implicit surfaces from discrete points (see \cite{Thevenaz,Shen,Morse,Anjyo,anjyo2014scattered,smolik2018large, dell2022numerical,drake2022implicit}).  

In previous research, if there were errors in the data, it was assumed that the given data represented small neighborhoods, and the goal was for the interpolated implicit function to pass through the centers of these neighborhoods as closely as possible. In our problem, the size of the data neighborhoods is fixed, and the interpolated function only needs to pass through any point within the neighborhood. The implicit functions obtained directly using existing methods generally do not satisfy our optimization objectives regarding the degree and coefficients of the implicit functions. In practical applications, we often need to fit an empirical formula based on a limited set of data obtained from experiments or observations. For these problems, if the degree of the interpolated polynomial implicit function is too high or the range of coefficient values is too large, the predictive capability of the formula may be reduced. Therefore, it is desirable for the interpolated formula to avoid overfitting and be as simple as possible in terms of its form (degree and coefficients). This increases the likelihood of capturing the essential connections between the relevant variables.

The rest of this paper is organized as follows. 
In the next section, we will first study the computation problem of finding the lowest degree of an implicit polynomial $f(x, y)$ passing through a given set of accurate points $(x_1, y_1), (x_2, y_2), \ldots, (x_N, y_N)$ on the plane. We construct a discriminant matrix $A^{[d]}$ of order $(d+1)(d+2)/2$ and prove the relationship between the lowest degree of the implicit polynomial and the determinant of the discriminant matrix being zero. Then, we investigate the computation problem of finding the lowest degree of a polynomial passing through a given set of rectangular regions $B_1, B_2, \ldots, B_N$ on the plane, extending the discriminant method from accurate points to the case of rectangular neighborhoods.
In section 3, we study related sparse interpolation problems. By analyzing the motion data of 38 celestial bodies in the solar system, including planets, asteroids, and dwarf planets, we search for the minimal number of terms in the lowest degree polynomial curve.
In section 4, we investigate the interval interpolation problem of rational-coefficient or integer-coefficient implicit functions with given monomials. We provide an algorithm to compute a positive number for the interpolation problem of integer-coefficient polynomials, such that among the interpolations with coefficients whose absolute values do not exceed this number, there exists a unique one passing through all the given neighborhoods.
In addition to presenting rigorous proofs for several theorems, the paper also provides concrete computational processes for several meaningful theoretical and practical problems, aiming to assist readers in solving similar problems or extending the methods presented in this paper. Finally, Section 5 concludes the paper with a brief summary.

\section{The lowest degree of approximate implicit interpolation}
\label{alg}
Let's consider a set of points $P_1, P_2, \cdots, P_N$ in the two-dimensional plane $\mathbb{R}^2$. Our objective is to find the simplest polynomial equation, denoted as $f(x, y)$, that has the lowest possible degree and passes through all the given points.
In the case where the coefficients of the polynomial are in $\mathbb{Q}$, we also aim to determine the equivalent polynomial equation with integer coefficients.

First of all, we show the existence of such a polynomial. We define the set $\mathcal{P}$ as
$$
\mathcal{P}:=V(P_1,P_2,\cdots,P_N)=\{f\in \mathbb{R}[x,y]\, |\, f(x_i,y_i)=0, ~~ i = 1,2,\cdots,N\},
$$
and we assume a polynomial template of degree $d$ 
\begin{equation}
f(x,y)=c_0+c_1\cdot x+c_2\cdot y+\cdots+c_{{p}-2}\cdot x^{d-1}y+c_{{p}-1} \cdot y^d,
\label{poly-k-d}
\end{equation}
where $f$ has $p = {d+2\choose 2}$ coefficients. Let $\cc_{d}$ and $\vv_{d}$ be the 
coefficient vector and the monomial vector of $f$, i.e.,
$\cc_{d}=[c_0,c_1,c_2,\ldots,c_{p-1}]^T$ and 
$\vv_d=[1,x,y,x^2,xy,y^2,\ldots,xy^{d-1},y^{d}]^T$, $f$ can be written as $f=\cc_{d}^{T} \, \vv_{d}$.
Let $\vv_{d}(x_i,y_i)$ be  the vector by evaluating the monomial vector $\vv_{d}$ with the given point 
$(x_i,y_i)$.
Then a system of homogeneous linear equations is constructed
as follows:
\begin{eqnarray*}
\cc_{d}^{T}\,\vv_{d}(x_j,y_j)=c_0+c_1\cdot x_j+c_2\cdot y_j+\cdots+c_{p-2} \cdot x_j y_j^{d-1}+c_{p-1}\cdot y_j^d=0, \quad j=1,2,\ldots,N.
\end{eqnarray*}

When $d$ is large enough, e. g., verifying, $d \geq u(N):=\lfloor \sqrt{2N+1/4}-1/2\rfloor\approx \sqrt{2N}-1$, we have the following inequality
$$
2p = (d+1)(d+2)\geq \left(\lfloor \sqrt{2N+\frac{1}{4}}-\frac{1}{2}\rfloor+1\right) 
\left(\lfloor \sqrt{2N+\frac{1}{4}}-\frac{1}{2}\rfloor+2\right)>2N.
$$
Therefore, the system of equations \eqref{poly-k-d} must have a non-zero solution $\cc_{d}^{\ast}=[c_0^{\ast},c_1^{\ast},c_2^{\ast},\ldots,c_{p-1}^{\ast}]^{T}$, and when $(x_i,y_i)\,(1\leq i\leq N)$ is a rational number, each component of the solution vector can also be taken as an integer. This proves the existence of a polynomial of degree $d$:
$$ 
f^{\ast}(x,y):= C \cdot ({\cc_{d}^{\ast}}^{T}\,\vv_{d})=C\cdot (c_0^{\ast}+c_1^{\ast} \cdot x+c_2^{\ast}\cdot y
+ c_{p-2}^{ast} \cdot x y^{d-1}+ +c_{p-1}^{\ast} \cdot y^d)\in \mathcal{P}.
$$
Obviously, when $N$ points are in some special positions, and $d$ smaller than $\lfloor\sqrt{2N}\rfloor-1$ the system of homogeneous equations \eqref{poly-k-d} may also have non-zero solutions, so $f(x,y)$ can be constructed according to \eqref{poly-k}, and the implicit function with smallest degree and passing through the given points can be obtained.

The number of coefficients of an algebraic curve of degree $d$ grows dramatically as $d$
increases. Hence, keeping the degree of a curve in
a reasonable range is very important for fast computation and fewer numerical errors.
In what follows, we present a way to determine the smallest degree of polynomials that pass through a given dataset.

Next, we present a method to determine whether $d$ is the smallest degree based on the coordinates of $P_i$ without solving the system of linear equations. 
Given $N$ points and a positive integer $d$, the interpolation of the implicit function matrix ($d-th$ implicit interpolation aggregate matrix) is defined as follows.
\begin{definition}\label{def:1}  Given $N$ points ($P_i=(x_i,y_i)$), an integer $d\geq 1$ , and $p={d+2 \choose 2}$, we denote $A^{[d]}$ as a $p \times p$ symmetric matrix defined as $A^{[d]}=\sum_{i=1}^{N}A_i$,
where $A_i=\vv_d(x_i,y_i) \vv_d(x_i,y_i)^{T}$, that is,   
$$
A_i=[1,x_i,y_i,\cdots,x_i^d,x_i^{d-1}y_i,\cdots,y_i^d]^T \, [1,x_i,y_i,\cdots,x_i^d,x_i^{d-1}y_i,\cdots,y_i^d].
$$
\end{definition}
We have the following theorem.
\begin{theorem}
Given $N$ points and an integer $d$, if the curve $f(x,y)=0$ passes through the given points, then
\begin{equation}
\min_{f\in \mathcal{P}} ~~ \mbox{\rm degree} ~(f)\leq d  ~~ \Leftrightarrow ~~\det (A^{[d]}) = 0.
\label{min-degree}
\end{equation}
\label{main-theorem}
\end{theorem}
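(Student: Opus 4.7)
The plan is to recognize the discriminant matrix $A^{[d]}$ as a Gram matrix and translate the determinant condition into a linear algebra statement about the existence of an interpolating polynomial of degree $\leq d$.

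First, I would assemble the evaluation vectors into a single $p \times N$ matrix $V = [\vv_d(x_1,y_1),\,\vv_d(x_2,y_2),\,\ldots,\,\vv_d(x_N,y_N)]$, and immediately observe that
\[
A^{[d]} \;=\; \sum_{i=1}^{N} \vv_d(x_i,y_i)\,\vv_d(x_i,y_i)^{T} \;=\; V V^{T}.
\]
This matrix is symmetric and positive semidefinite, which is the pivotal structural fact for everything that follows.

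Next, I would use the standard identity $\ker(V V^{T}) = \ker(V^{T})$, proved by noting that $V V^{T}\cc = 0$ implies $\cc^{T} V V^{T} \cc = \|V^{T}\cc\|^{2} = 0$ and hence $V^{T}\cc = 0$ (the converse being obvious). Therefore $\det(A^{[d]}) = 0$ is equivalent to the existence of a nonzero vector $\cc \in \mathbb{R}^{p}$ such that $V^{T}\cc = 0$, which written out componentwise says $\cc^{T}\vv_d(x_i,y_i)=0$ for every $i=1,\ldots,N$. Setting $f(x,y) := \cc^{T}\vv_d$, this is precisely the statement that $f$ is a nonzero polynomial of degree at most $d$ that vanishes on all $N$ given points, i.e., $f\in\mathcal{P}$ and $\deg(f)\leq d$. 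Since the monomial entries of $\vv_d$ are linearly independent in $\mathbb{R}[x,y]$, $\cc\neq 0$ really does correspond to a nonzero polynomial, closing both directions of the equivalence.

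Packaging the two directions: if $\min_{f\in\mathcal{P}}\deg(f)\leq d$, then the minimizing $f$ yields a nonzero coefficient vector $\cc$ in $\ker(V^{T})$, forcing $\det(VV^{T})=0$; conversely, any $\cc\in\ker(V^{T})\setminus\{0\}$ produces an $f\in\mathcal{P}$ of degree at most $d$. I do not anticipate a real obstacle here — the only subtlety worth stating explicitly is the equality $\ker(VV^{T})=\ker(V^{T})$, which must be invoked because otherwise one only obtains $\ker(VV^{T})\supseteq\ker(V^{T})$ by inspection; the semidefinite argument above is what supplies the reverse inclusion and makes the theorem a genuine \emph{if and only if} rather than a one-way implication.
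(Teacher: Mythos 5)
Your proposal is correct, and its core identity is the same one the paper relies on: the quadratic form $\cc^{T}A^{[d]}\cc=\sum_{i=1}^{N}\bigl(\cc^{T}\vv_d(x_i,y_i)\bigr)^{2}$, i.e.\ the fact that $A^{[d]}$ is the Gram matrix $VV^{T}$. Your backward direction ($\det(A^{[d]})=0$ gives a nonzero vanishing polynomial) is essentially identical to the paper's: take $\cc\neq 0$ in the kernel and conclude $\|V^{T}\cc\|^{2}=\cc^{T}A^{[d]}\cc=0$. Where you genuinely diverge is the forward direction. The paper sets up the constrained quadratic program $\min\sum_i(\vv_d(x_i,y_i)^{T}\cc)^{2}$ subject to $\|\cc\|_2=1$, invokes Lagrange multipliers, argues $\lambda=0$ because the given interpolant attains the value zero, and only then reads off the normal equations $A^{[d]}\cc=0$; you instead observe directly that the coefficient vector $\cc$ of any $f\in\mathcal{P}$ with $\deg f\leq d$ satisfies $V^{T}\cc=0$, hence $A^{[d]}\cc=VV^{T}\cc=0$ and $\det(A^{[d]})=0$, with the kernel identity $\ker(VV^{T})=\ker(V^{T})$ packaging both directions at once. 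Your route is shorter, avoids the (somewhat roundabout) optimization machinery, and in effect anticipates the factorization $A^{[d]}=M_d M_d^{T}$ that the paper only records later as a separate lemma; the paper's Lagrangian formulation buys nothing extra for this theorem, though it foreshadows the optimization viewpoint used in the interval and sparsity sections. You were also right to flag the two small points that make the equivalence airtight: a nonzero coefficient vector corresponds to a nonzero polynomial because the monomials in $\vv_d$ are linearly independent, and a polynomial of degree strictly less than $d$ is handled by zero-padding its coefficient vector to length $p$.
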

\begin{proof} \quad
 Let us first prove $ \Leftarrow$.

Assuming $(x_1,y_1),\cdots,$ $(x_N,y_N)$ satisfy
$ \det(A^{[d]})=0$,
we have the homogeneous linear equation system
$$
A^{[d]} \, \cc_{d} =[0,0,\cdots,0]^T.
$$
This equation has a non-zero solution $\cc_{d}^{\ast}=[c_0^{\ast},c_1^{\ast},\cdots,c_{p-1}^{\ast}]^{T}\in \mathbb{R}^{p}\setminus\{[0,0,\cdots,0]^{T}\}$, leading to:
$$
\sum_{i=1}^{N}A_{i} \cc_{d}^{\ast}=[0,0,\cdots,0]^T.
$$
Thus, we can derive the following expression:
\begin{eqnarray}
&&\sum_{i=1}^{N}(c_0^{\ast}+c_1^{\ast}x_i+c_2^{\ast}y_i+\cdots+c_{p-1-d}^{\ast}x_i^d+\cdots+c_{p-1}^{\ast}y_i^d)^2\nonumber\\ [7pt]
&=&\sum_{i=1}^{N}({\cc_{d}^{\ast}}^{T} \, \vv_{d}(x_i,y_i))(\vv_d(x_i,y_i)^{T} \, \cc_{d}^{\ast}) = \sum_{i=1}^{N} {\cc_d^{\ast}}^{T} 
\,(\vv_{d}(x_i,y_i)\, \vv_d(x_i,y_i)^{T} \, \cc_{d}^{\ast})
=\sum_{i=1}^{N} {\cc_d^{\ast}}^{T} \, (A_{i} \cc_d^{\ast})
\nonumber\\ [7pt]
&=&\sum_{i=1}^{N} {\cc_{d}^{\ast}}^{T} \, [0,0,\cdots,0]^T ~~ = 0,
\end{eqnarray}
which implies that
$$
f(x_i,y_i) = c_0^{\ast}+c_1^{\ast}x_i+c_2^{\ast}y_i+\cdots+c_{p-1-d}^{\ast}x_i^d+\cdots+c_{p-1}^{\ast}y_i^d=0, ~~ i=1,2,\cdots,N.
$$
Therefore, we can conclude that $f(x,y) \in \mathcal{P}$, satisfying the degree constraint~(\ref{min-degree}).

To prove the reverse implication, we Suppose that
$f(x,y)$ is a polynomial of degree $d$ passing through the points $(x_i,y_i)$ for $i=1,2,\cdots,N$, with unknown coefficients $c_0,\ldots,c_{p-1}$.
Consider the quadratic optimization problem:
\begin{eqnarray*}\label{BMI2}
   \left\{\begin{array}{ccl}
      &{\displaystyle \min_{\cc_{d} \in \RR^{p}} }  & 
        {\displaystyle \sum_{i=1}^{N}} (\vv_d(x_i,y_i)^T \, \cc_{d})^2 \\
       & s.t. & \|\cc_{d}\|_{2}^{2}=1. 
   \end{array}\right.
\end{eqnarray*}

we can use the Lagrange multiplier method and the necessary optimality conditions to derive the following equality:
$$
\sum_{i=1}^N\left(\hat{c}_0+\hat{c}_1 x_i+\cdots+\hat{c}_{p-1} y_i^d\right)^2+\lambda(\hat{c}_0^2+\hat{c}_1^2+\cdots+\hat{c}_{p-1}^2)=0,
$$

From this equation, together with the fact that $(x_i,y_i)\in Z(f)$ and
$[\hat{c}_0,\hat{c}_1,\cdots,\hat{c}_{p-1}]^{T}\in \mathbb{R}^p\setminus\{[0,0,\cdots,0]^{T}\},$
we can conclude that $\lambda=0$.
Thus, $[\hat{c}_0,\hat{c}_1,\cdots,\hat{c}_{p-1}]^{T}$ is a non-zero solution of the following homogeneous linear equations
\begin{eqnarray*}
&&\sum_{i=1}^{N} (c_0+x_ic_1+y_i c_2+\cdots+x_i^d c_{p-1-d}+\cdots+y_i^d c_{p-1}) = 0,\\
&&\sum_{i=1}^{N} x_i(c_0+x_ic_1+y_i c_2+\cdots+x_i^d c_{p-1-d}+\cdots+y_i^d c_{p-1}) = 0,\\
&&\vdots\\
&&\sum_{i=1}^{N} y_i^d(c_0+x_ic_1+y_i c_2+\cdots+x_i^d c_{p-1-d}+\cdots+y_i^d c_{p-1}) = 0.
\end{eqnarray*}
In matrix form, this can be written as:
$$
\left[
\begin{bmatrix}
1\\ x_1\\ \vdots \\ y_1^d
\end{bmatrix}
[1,x_1,\cdots,y_1^d]
+
\cdots
+
\begin{bmatrix}
1\\ x_N\\ \vdots \\ y_N^d
\end{bmatrix}
[1,x_N,\cdots,y_N^d]
\right]
\;
\begin{bmatrix}
c_0\\ c_1\\ \vdots \\ c_{p-1}
\end{bmatrix}
=(\sum_{i=1}^{N}A_i) \cdot \begin{bmatrix}
c_0\\ c_1\\ \vdots \\ c_{p-1}
\end{bmatrix}
=
\begin{bmatrix}
0\\ 0\\ \vdots \\ 0
\end{bmatrix}
$$
which can be further simplified as:
$$
A^{[d]} \, [c_0,c_1,\cdots,c_{p-1}]^T
=[0,0,\cdots,0]^T.
$$
Thus, we have 
$
\det (A^{[d]}) =0.
$
\end{proof}
\begin{corollary} 
For any integer $d$ and any points $(x_1,y_1),\cdots,(x_q,y_q)\in \mathbb{R}^2$ with $q< {{d+2}\choose 2}$, we have
$$
\det\left(
\sum_{i=1}^{q} A_i \right)=0.
$$
\label{akeq0}
\end{corollary}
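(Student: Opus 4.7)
The plan is to deduce the corollary directly from Theorem~\ref{main-theorem}, supplemented (if needed) by a rank argument. The key observation is that $\sum_{i=1}^{q} A_i$ is the discriminant matrix $A^{[d]}$ associated with the $q$ points $(x_1,y_1),\dots,(x_q,y_q)$ under Definition~\ref{def:1}, and $q < \binom{d+2}{2} = p$.

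First I would invoke the existence argument already established in the introductory paragraphs: the condition $f(x_i,y_i)=0$ for $i=1,\dots,q$ is a homogeneous linear system in the $p$ unknown coefficients $c_0,\dots,c_{p-1}$ with only $q < p$ equations, so it necessarily admits a non-zero solution $\cc_d^{\ast}$. Hence the set $\mathcal{P}=V(P_1,\dots,P_q)$ contains a non-zero polynomial of degree at most $d$, which gives $\min_{f\in \mathcal{P}} \deg(f) \leq d$. By the ``$\Rightarrow$'' direction of Theorem~\ref{main-theorem} applied to this configuration of $q$ points, we conclude $\det(A^{[d]}) = \det(\sum_{i=1}^q A_i) = 0$.

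As an alternative (or confirming) route, I would note the purely linear-algebraic reason: each $A_i=\vv_d(x_i,y_i)\vv_d(x_i,y_i)^T$ is a rank-one symmetric matrix, so $\sum_{i=1}^q A_i$ has rank at most $q$. Since this sum is a $p\times p$ matrix with $p>q$, it is rank-deficient and therefore singular, so its determinant vanishes. This provides a one-line proof independent of Theorem~\ref{main-theorem} and serves as a sanity check.

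There is no real obstacle here; the statement is essentially a book-keeping consequence of ``too few outer products to span.'' The only care needed is to make sure the indexing is consistent with Definition~\ref{def:1} when $N$ is replaced by $q$, and to be explicit that $p = \binom{d+2}{2}$ so that the strict inequality $q<p$ forces rank deficiency. I would therefore present the rank argument as the main proof, perhaps with a brief remark that this also follows from Theorem~\ref{main-theorem} via the existence of a degree-$d$ interpolant when the number of conditions is smaller than the number of coefficients.
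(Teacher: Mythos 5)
Your rank argument is exactly the paper's proof: each $A_i$ is a rank-one outer product, so $\sum_{i=1}^q A_i$ has rank at most $q < p = \binom{d+2}{2}$ and is therefore singular. Since you propose this as the main proof (the detour through Theorem~\ref{main-theorem} being only a supplementary remark), your proposal is correct and essentially the same as the paper's.
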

\begin{proof}
From Definition~\ref{def:1},
$\mbox{ \rm rank}(A_i)=1$ for each $i$, which implies
$$
\mbox{\rm rank}(\sum_{i=1}^{q} A_i)\leq \sum_{i=1}^q \mbox{\rm rank}(A_i)=q.
$$
Since $q<(d+1)(d+2)/2$, it leads to $\det \left(\sum_{i=1}^{q}A_i\right)=0$.
\end{proof}
\vspace{0.5cm}
The following result provides a lower bound on the degree of polynomials in $V(P_1,P_2,\cdots,P_N)$, based on the well-known B\'{e}zout theorem (cf.\cite{walker1950algebraic}).
\begin{corollary}\quad
For any points $P_1=(x_1,y_1),\cdots,P_N=(x_N,y_N)\in \mathbb{R}^2$.
If there exists a polynomial $f\in \mathbb{R}[x,y]$ of degree $d$ that passes through $(x_{1},y_{1}),\cdots,(x_q,y_{q})$ but not through $(x_{q+1},y_{q+1}), \cdots,$ $(x_N,y_N),$ where
$
{d+2\choose 2}-1\leq q < N,
$
then
$$\min_{f\in \mathcal{P}}  ~~ \mbox{\rm degree}~(f)\geq ~ \max~(d+1, ~\lceil\frac{q}{d}\rceil).
$$
\label{cor-bezout}
\end{corollary}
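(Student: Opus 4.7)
The plan is to take an arbitrary $g\in\mathcal{P}$ with $\deg g = e$ and prove the two lower bounds $e\geq \lceil q/d\rceil$ and $e\geq d+1$ separately, both by applying Bézout's theorem to the plane curves defined by $f$ and $g$.

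First I would handle the bound $e\geq \lceil q/d\rceil$. If $\gcd(f,g)=1$, Bézout's theorem gives $|Z(f)\cap Z(g)|\leq d\cdot e$; since both $f$ and $g$ vanish at $P_1,\ldots,P_q$, we get $q\leq de$ and hence $e\geq \lceil q/d\rceil$. When $f$ and $g$ share a nontrivial common factor $h$, I would factor $f=hf_1$, $g=hg_1$ with $\gcd(f_1,g_1)=1$ and observe that $h$ cannot vanish at any $P_i$ with $i>q$ (because $f$ does not), so $g_1$ must vanish at each such $P_i$. Applying Bézout to the coprime pair $(f_1,g_1)$ and tallying the points $P_1,\ldots,P_q$ according to whether they lie on $h=0$ or on $f_1=0$ should recover the same bound.

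Next, for $e\geq d+1$, I would argue by contradiction, assuming $e\leq d$. In the cleanest subcase $f\mid g$, write $g=f\cdot r$; since $g(P_{q+1})=0$ but $f(P_{q+1})\neq 0$, the cofactor $r$ must be nonconstant, forcing $\deg g\geq d+1$, a contradiction. If $\gcd(f,g)=1$, Bézout gives $q\leq de\leq d^2$, which together with the hypothesis $q\geq \binom{d+2}{2}-1=(d^2+3d)/2$ yields a contradiction for small $d$. The remaining subcase is when $\gcd(f,g)$ is a proper nontrivial divisor of $f$, which must be handled by combining the Bézout inequality on the coprime parts with the divisibility structure and the size hypothesis on $q$.

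The main obstacle will be this last subcase, since the plain Bézout bound only excludes $e\leq d$ directly when $q>d^2$, and $\binom{d+2}{2}-1$ exceeds $d^2$ only for $d\leq 2$. For $d\geq 3$ the contradiction has to come from a more delicate count tracking how many of the $P_i$ lie on the common factor $h$ versus on the relatively prime complements $f_1$ and $g_1$, and arranging the inequalities so that the assumption $q\geq \binom{d+2}{2}-1$ forces one of them to fail. The rest of the argument is a routine invocation of Bézout's theorem as cited from \cite{walker1950algebraic}.
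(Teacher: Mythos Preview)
The paper does not actually prove this corollary: it only says the bound is ``based on the well-known B\'ezout theorem'' and then paraphrases the special case $d=1$. Your plan to apply B\'ezout to the pair $(f,g)$ is therefore precisely the intended route, and your handling of the coprime case is correct on both counts.

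The obstacle you isolate in the non-coprime case, however, is not a technicality to be finessed: the corollary as written is false, so no refinement of the counting will close the gap. For the bound $e\ge\lceil q/d\rceil$, take $d=1$, $q=5$, $N=6$ with $P_1,\dots,P_5$ on a line $f=0$ and $P_6$ off it; then $g=f\cdot\ell'$, with $\ell'$ any line through $P_6$, lies in $\mathcal P$ and has $\deg g=2<5=\lceil q/d\rceil$. In your notation this is $h=f$, $f_1=1$, so ``applying B\'ezout to the coprime pair $(f_1,g_1)$'' is vacuous and cannot recover the bound. For the bound $e\ge d+1$ with $d\ge 3$, take $d=3$, $q=9$, $N=10$, $f=x(x-1)(x-2)$, the nine points $\{0,1,2\}^2$, and $P_{10}=(\tfrac12,0)$; then $g=y(y-1)(y-2)$ passes through all ten points, giving $\deg g=3<d+1$. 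Here $\gcd(f,g)=1$ and B\'ezout yields only $q\le 9$, exactly matching $q$, so the contradiction you seek simply does not materialise once $\binom{d+2}{2}-1\le d^2$. The statement needs an extra hypothesis (for instance, that $f$ share no factor with any member of $\mathcal P$) which the paper does not supply; your difficulty is with the corollary, not with your argument.
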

In other words, if we find a subset of $q$ points in $P_1,P_2,\cdots,P_N$ that lie on a straight line, then it implies that either all the points $P_1,P_2,\cdots,P_N$ lie on that same line, or any polynomial passing through $P_1,P_2,\cdots,P_N$ must have a degree of at least $q$.

The following example explain the usage of Theorem \ref{main-theorem} and Corollary \ref{akeq0}, \ref{cor-bezout}

\begin{example}\label{ex1}
Consider the following example to illustrate our approach. We have 22 points in the plane, as shown in Figure~\ref{17points}. The coordinates of these points are given in Table \ref{tab1}.

\begin{table}[!h]
\caption{The coordinates of the $22$ points.}{}
\label{tab1}
\begingroup
\setlength{\tabcolsep}{4pt} 
\renewcommand{\arraystretch}{1.3} 
$$\begin{array}{|c|c|c||c|c|c||c|c|c|}
\hline
\phantom{PPP}& \phantom{xxxx}x\phantom{xxxx}&\phantom{yy}y\phantom{yy} &
\phantom{PPP}&\phantom{xxxx}x\phantom{xxxx}&\phantom{yy}y\phantom{yy} &
\phantom{PPP}&\phantom{xxxx}x\phantom{xxxx}&\phantom{yyyi}y\phantom{yyyi} \\ \hline
P_1 & 0& 1& P_{9} &1&2 & P_{17}&\frac{\sqrt {18+2\,\sqrt {73}}}{6}&\frac{2}{3}\\
\hline
P_2 &-\frac{2}{3}&\frac{1}{3} & P_{10}&\frac{2\,\sqrt {5}}{3} & \frac{5}{3}&P_{18}&-\frac{\sqrt {9-\sqrt {73}}}{3}&\frac{4}{3} \\
\hline
P_3 &\frac{2}{3} & \frac{1}{3} & P_{11}&\frac{\sqrt{10}}{6}&\frac{5}{3}&P_{19} &\frac{\sqrt {9+\sqrt {73}}}{3} &\frac{4}{3} \\
\hline
P_4 & -\frac{\sqrt{2}}{6} & \frac{1}{3}  & P_{12} &\frac{\sqrt {12-6\,\sqrt {2}}}{8}&\frac{1}{4}& P_{20}&-\frac{\sqrt {9+\sqrt {73}}}{3}&\frac{4}{3} \\
\hline
P_{5} & -\frac{2\,\sqrt {5}}{3} & \frac{5}{3} & P_{13} &\frac{\sqrt {12+6\,\sqrt {2}}}{8}& \frac{1}{4} & P_{21}&-\frac{\sqrt {12+9\,\sqrt {2}}}{4}& 1+\frac{3 \sqrt{2}}{4}  \\
 \hline
P_{6}& -\sqrt{2}& 2 & P_{14}&-\frac{\sqrt {18-6\,\sqrt {7}}}{4}&\frac{3}{2} &P_{22} & \frac{1}{4}& 1+\frac{3 \sqrt{2}}{4} \\
\hline
P_7 & -\frac{\sqrt{6}}{2}& 1 &  P_{15}&\frac{\sqrt {18-6\,\sqrt {7}}}{4}&\frac{3}{2} & & & \\ \hline
P_{8}  & \frac{\sqrt{6}}{2} & 1 & P_{16} & -\frac{\sqrt {18-2\,\sqrt {73}}}{6}&\frac{2}{3} & & &\\\hline
\end{array}$$
\endgroup
\end{table}

We aim to compute the minimal polynomial ${f}(x,y)=0$ which defines the implicit
equation of the curve $f(x,y)=0$ passing through these 22 points.
We have $N=22$ and
$$
{{5+2}\choose 2}=21<22,\quad
{{6+2}\choose 2}=28>22. 
$$
Thus, the smallest degree of a polynomial that passes through the 22 points at most $6$,
$$ 
\mbox{\rm degree} ~{(f)} \leq 6 . 
$$
Therefore, it is evident that the line
$y=1$ passes through the points $P_1,~P_7$ and $P_{8}$ but the other $19$ points do not lie on this line, so according to Corollary~\ref{cor-bezout}, this implies that:
$$
\mbox{\rm degree}~({f})\geq \max\{1+1,\lceil\frac{3}{1}\rceil\}=3.
$$
So, we first search for a polynomial of degree $d=3$ in the following form
$$
f(x,y)=c_0+c_1x+c_2y+c_3x^2+c_4xy+c_5y^2+c_6x^3+c_7x^2y+c_8xy^2+c_9y^3.
$$

\begin{figure}
\centering
\includegraphics[width=10.00cm]{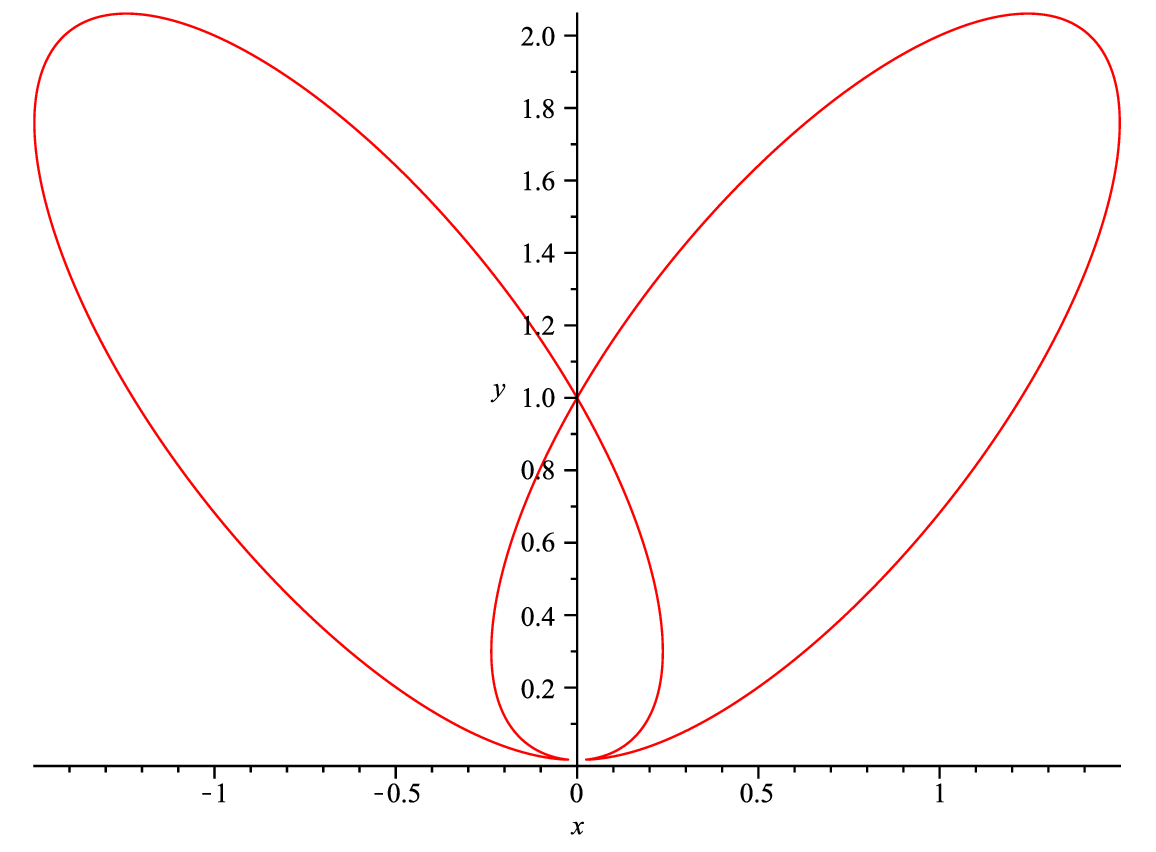}
\begin{picture}(300,20)(-153.5,-42.5)

\put(0,90){\circle*{3}}
\put(21.1,22.5){\circle*{3}}
\put(49.6,22.5){\circle*{3}}

\put(-59,30){\circle*{3}}
\put(59,30){\circle*{3}}
\put(-21.2,30){\circle*{3}}

\put(-14.3,60){\circle*{3}}
\put(88.0,60){\circle*{3}}
\put(-133.2,150){\circle*{3}}
\put(47.4,150){\circle*{3}}
\put(133.2,150){\circle*{3}}

\put(90,179.3){\circle*{3}}
\put(-127.0,179.0){\circle*{3}}
\put(-111.9,185.0){\circle*{3}}
\put(111.9,185.0){\circle*{3}}

\put(-110.227, 90){\circle*{3}}
\put(110.227,90){\circle*{3}}

\put(-20.258, 120.){\circle*{3}}
\put(125., 120.){\circle*{3}}
\put(-125., 120.){\circle*{3}}

\put(-32.80, 135.){\circle*{3}}
\put(32.80, 135.){\circle*{3}}

{\color{red}
\put(-2,1){\line(1,0){4}}
\put(0,1.4142){\circle*{3}}
}
\end{picture}
\caption{The 22 points (small black circles) on the plane selected from the quartic curve
${y}^{4}-2\,{y}^{3}+{y}^{2}-3\,{x}^{2}y+2\,{x}^{4}=0$ for recovering the polynomial.
}
\label{17points}
\end{figure}

Since $(d+1)(d+2)/2=10$, we first choose 10 points from the 22 points, for instance, $P_1,\cdots,P_{10}$. \label{10points}. We then check whether there are cubic curves passing through these 10 points. For this purpose, we substitute the coordinates $x=x_i,y=y_i$ of each point into a $10\times 10$ symmetric matrix generated by
$$[1,x,y,x^2,xy,y^2,x^3,x^2y,xy^2,y^3]^T [1,x,y,x^2,xy,y^2,x^3,x^2y,xy^2,y^3],$$
as follows:
$$
A_i=
 \left( \begin {array}{cccccccccc} 1&x_{i}&y_{i}&{x_{i}^2}&x_{i}y_{i}&{y_{i}^2}&{x_{i}^3}&{x_{i}
^2}y_{i}&x_{i}{y_{i}^2}&{y_{i}^3}\\ \noalign{\medskip}x_{i}&{x_{i}^2}&x_{i}y_{i}&{x_{i}^3}&{x_{i}^
2}y_{i}&x_{i}{y_{i}^2}&{x_{i}^4}&{x_{i}^3}y_{i}&{x_{i}^2}{y_{i}^2}&{y_{i}^3}x_{i}
\\ \noalign{\medskip}y_{i}&x_{i}y_{i}&{y_{i}^2}&{x_{i}^2}y_{i}&x_{i}{y_{i}^2}&{y_{i}^3}&{x_{i}^3}y_{i}&{
x_{i}^2}{y_{i}^2}&{y_{i}^3}x_{i}&{y_{i}^4}\\ \noalign{\medskip}{x_{i}^2}&{x_{i}^3}&{x_{i}}
^{2}y_{i}&{x^4}&{x_{i}^3}y_{i}&{x_{i}^2}{y_{i}^2}&{x_{i}^5}&{x_{i}^4}y_{i}&{x_{i}^3}{y_{i}^2}&
{y_{i}^3}{x_{i}^2}\\ \noalign{\medskip}x_{i}y_{i}&{x_{i}^2}y_{i}&x_{i}{y_{i}^2}&{x_{i}^3}y_{i}&{x_{i}^
2}{y_{i}^2}&{y_{i}^3}x_{i}&{x_{i}^4}y_{i}&{x_{i}^3}{y_{i}^2}&{y_{i}^3}{x_{i}^2}&{y_{i}^4}x_{i}
\\ \noalign{\medskip}{y_{i}^2}&x_{i}{y_{i}^2}&{y_{i}^3}&{x_{i}^2}{y_{i}^2}&{y_{i}^3}x_{i}&
{y_{i}^4}&{x_{i}^3}{y_{i}^2}&{y_{i}^3}{x_{i}^2}&{y_{i}^4}x_{i}&{y_{i}^5}
\\ \noalign{\medskip}{x_{i}^3}&{x_{i}^4}&{x_{i}^3}y_{i}&{x_{i}^5}&{x_{i}^4}y_{i}&{x_{i}^3}
{y_{i}^2}&{x_{i}^6}&{x_{i}^5}y_{i}&{x_{i}^4}{y_{i}^2}&{y_{i}^3}{x_{i}^3}
\\ \noalign{\medskip}{x_{i}^2}y_{i}&{x_{i}^3}y_{i}&{x_{i}^2}{y_{i}^2}&{x_{i}^4}y_{i}&{x_{i}^3}
{y_{i}^2}&{y_{i}^3}{x_{i}^2}&{x_{i}^5}y_{i}&{x_{i}^4}{y_{i}^2}&{y_{i}^3}{x_{i}^3}&{y_{i}^4}{
x_{i}^2}\\ \noalign{\medskip}x_{i}{y_{i}^2}&{x_{i}^2}{y_{i}^2}&{y_{i}^3}x_{i}&{x_{i}^3}{y_{i}}
^{2}&{y_{i}^3}{x_{i}^2}&{y_{i}^4}x_{i}&{x_{i}^4}{y_{i}^2}&{y_{i}^3}{x_{i}^3}&{y_{i}^4}{x_{i}}^
{2}&{y_{i}^5}x_{i}\\ \noalign{\medskip}{y_{i}^3}&{y_{i}^3}x_{i}&{y_{i}^4}&{y_{i}^3}{x_{i}^
2}&{y_{i}^4}x_{i}&{y_{i}^5}&{y_{i}^3}{x_{i}^3}&{y_{i}^4}{x_{i}^2}&{y_{i}^5}x_{i}&{y_{i}^6}
\end {array} \right).
\label{a3expand}
$$
By summing up all the matrices $A_i$, we obtain the matrix $A^{[3]}=\sum_{i=1}^{10} A_i $, which is a $10\times 10$ symmetric matrix given by:
$$
A^{[3]}=
 \left( \begin {array}{cc} U&V\\ \noalign{\medskip}V^T&W\end {array}
 \right),
$$
where $U,V,W$ are $5\times 5$ matrices and
$U, W$ are symmetric, as follows:

$$
\begin{array}{l}
U=
 \left( \begin {array}{ccccc}
 \scalebox{0.7071}{$10$}&\scalebox{0.7071}{$-$}\frac{7}{6}\scalebox{0.7071}{$\sqrt{2}+1$}&{\frac{34}{3}}&{\frac{205}{18}}&
 \scalebox{0.7071}{$-$}{\frac{37}{18}}\scalebox{0.7071}{$\sqrt{2}+2$}\\[7pt]
\scalebox{0.7071}{$-$}\frac{7}{6}\scalebox{0.7071}{$\sqrt{2}+1$} &{\frac{205}{18}}&\scalebox{0.7071}{$-$}{\frac{37}{18}}\scalebox{0.7071}{$\sqrt{2}+2$}&
 \scalebox{0.7071}{$-$}{\frac{217}{108}} \scalebox{0.7071}{$\sqrt{2}+1$}&{\frac{301}{18}}\\[7pt]
{\frac{34}{3}} &\scalebox{0.7071}{$-$}{\frac{37}{18}}\scalebox{0.7071}{$\sqrt{2}+2$}&{\frac{152}{9}}&{\frac{301}{18}}&\scalebox{0.7071}{$-$}{\frac{217}{54}}\scalebox{0.7071}{$\sqrt{2}+4$}\\[7pt]
{\frac{205}{18}} &\scalebox{0.7071}{$-$}{\frac{217}{108}} \scalebox{0.7071}{$\sqrt{2}+1$}&{\frac{301}{18}} &{\frac{6407}{324}}&\scalebox{0.7071}{$-$}{\frac{1297}{324}}\scalebox{0.7071}{$\sqrt{2}+2$}\\[7pt]
\scalebox{0.7071}{$-$}{\frac{37}{18}}\scalebox{0.7071}{$\sqrt{2}+2$} &{\frac{301}{18}}&\scalebox{0.7071}{$-$}{\frac{217}{54}}\scalebox{0.7071}{$\sqrt{2}+4$}&\scalebox{0.7071}{$-$}{\frac{1297}{324}}\scalebox{0.7071}{$\sqrt{2}+2$}&{\frac{4447}{162}}
\end {array} \right),
\end{array}$$
\\
$$\begin{array}{l}
V=
 \left( \begin {array}{ccccc}
 {\frac{152}{9}}&\scalebox{0.7071}{$-$}{\frac{217}{108}}\scalebox{0.7071}{$\sqrt{2}+1$}&{\frac{301}{18}}&
 \scalebox{0.7071}{$-$}{\frac{217}{54}}\scalebox{0.7071}{$\sqrt{2}+4$}&{\frac{766}{27}}\\ [7pt]
   \scalebox{0.7071}{$-$}{\frac{217}{54}}\scalebox{0.7071}{$\sqrt{2}+4$}&{\frac{6407}{324}}&
   \scalebox{0.7071}{$-$}{\frac{1297}{324}}\scalebox{0.7071}{$\sqrt{2}+2$}&
   {\frac{4447}{162}}&\scalebox{0.7071}{$-$}{\frac{1297}{162}}\scalebox{0.7071}{$\sqrt{2}+8$}\\ [7pt]
   {\frac{766}{27}}&\scalebox{0.7071}{$-$}{\frac{1297}{324}}\scalebox{0.7071}{$\sqrt{2}+2$}&{\frac{4447}{162}}&
   \scalebox{0.7071}{$-$}{\frac{1297}{162}}\scalebox{0.7071}{$\sqrt{2}+8$}&{\frac{4088}{81}}\\ [7pt]
   {\frac{4447}{162}}&\scalebox{0.7071}{$-$}{\frac{7777}{1944}}\scalebox{0.7071}{$\sqrt{2}+1$}&{\frac{30223}{972}}&
   \scalebox{0.7071}{$-$}{\frac{7777}{972}}\scalebox{0.7071}{$\sqrt{2}+4$}&{\frac{857}{18}}\\ [7pt]
\scalebox{0.7071}{$-$}{\frac{1297}{162}}\scalebox{0.7071}{$\sqrt{2}+8$}&{\frac{30223}{972}}&
\scalebox{0.7071}{$-$}{\frac{7777}{972}}\scalebox{0.7071}{$\sqrt{2}+4$}&{\frac{857}{18}}&\scalebox{0.7071}{$-$}{\frac{7777}{486}}\scalebox{0.7071}{$\sqrt{2}+16$}\end {array} \right),
\end{array}$$
\\
$$\begin{array}{l}
W=
 \left( \begin {array}{ccccc} 
 {\frac{4088}{81}}&\scalebox{0.7071}{$-$}{\frac{7777}{972}}\scalebox{0.7071}{$\sqrt{2}+4$}&{\frac{857}{18}}&
 \scalebox{0.7071}{$-$}{\frac{7777}{486}}\scalebox{0.7071}{$\sqrt{2}+16$}&{\frac{22534}{243}}\\[7pt] 
\scalebox{0.7071}{$-$}{\frac{7777}{972}}\scalebox{0.7071}{$\sqrt{2}+4$} &{\frac{220879}{5832}}&\scalebox{0.7071}{$-$}{\frac{46657}{5832}}\scalebox{0.7071}{$\sqrt{2}+2$}&{\frac{151571}{2916}}&
 \scalebox{0.7071}{$-$}{\frac{46657}{2916}}\scalebox{0.7071}{$\sqrt{2}+8$}\\ [7pt] 
{\frac{857}{18}} &\scalebox{0.7071}{$-$}{\frac{46657}{5832}}\scalebox{0.7071}{$\sqrt{2}+2$}&{\frac{151571}{2916}}&\scalebox{0.7071}{$-$}{\frac{46657}{2916}}\scalebox{0.7071}{$\sqrt{2}+8$}&{\frac{124375}{1458}}\\[7pt]
\scalebox{0.7071}{$-$}{\frac{7777}{486}}\scalebox{0.7071}{$\sqrt{2}+16$} &{\frac{151571}{2916}}&\scalebox{0.7071}{$-$}{\frac{46657}{2916}}\scalebox{0.7071}{$\sqrt{2}+8$}& {\frac{124375}{1458}}&\scalebox{0.7071}{$-$}{\frac{46657}{1458}}\scalebox{0.7071}{$\sqrt{2}+32$}\\[7pt]
{\frac{22534}{243}} &\scalebox{0.7071}{$-$}{\frac{46657}{2916}}\scalebox{0.7071}{$\sqrt{2}+8$}&{\frac{124375}{1458}}& \scalebox{0.7071}{$-$}{\frac{46657}{1458}}\scalebox{0.7071}{$\sqrt{2}+32$} &{\frac{126752}{729}}
\end {array} \right).
\end{array}
$$

\vspace{0.5cm}
By calling the computer algebra software Maple, we can verify that:
$$
\det(A^{[3]})
=
{\frac {341196800000}{282429536481}}+{\frac {280985600000}{
847288609443}}\,\sqrt {2}
\approx 1.67707205.
$$
According to Theorem~\ref{main-theorem}, we can conclude that no cubic curve $f(x,y)$ passes through the selected 10 points. and thus, it cannot pass through the given 22 points.

Next, we consider the case $d=4$. Since ${4+2\choose 2}=15$ we select 5 additional interpolating points
from the remaining points, for example, $P_{11},\cdots,P_{15}$.
Similarly to before, we calculate the matrix $A_i\,(1\leq i\leq 15)$ for each of the 15 points and then sum them to obtain the $15 \times 15$ matrix $A^{[4]}:=\sum_{i=1}^{15}A_{i}$. Then, we calculate the determinant of $A^{[4]}$ using Maple, and the result is $\det(A^{[4]})=0$. According to Theorem \ref{main-theorem}, this shows that there is a quartic curve passing through $P_1,P_2,\cdots,P_{15}$

To determine the coefficients of the quartic curve, we solve the system of equations:
$$
A^{[4]} \, [c_0,c_1,\cdots,c_{14}]^{T}=[0,0,\cdots,0]^{T}$$
where $[c_0, c_1, \ldots, c_{14}]^T$ represents the coefficient vector. The non-zero coefficients are found to be:
$$
c_5=1,\quad c_7=-3,\quad c_9=-2,\quad c_{10}=2, \quad c_{14}=1.
$$
Hence, $P_1,P_2,\cdots,P_{15}$ lie on the following quartic curve:
$$
f(x,y)=y^2-3x^2y-2y^3+2x^4+y^4.
$$
By substituting the coordinates of the other 7 points into this polynomial, we observe that all the 22 points lie on the quadratic curve.  Finally, it can be asserted that the lowest degree of polynomial curves passing through 22 points simultaneously is 4.
\end{example}

\vspace{0.5cm}

If the data contains errors, we can consider that the position of each point $(x_i, y_i)$ lies within a hemisphere defined by $(x-x_i)^2 + (y-y_i)^2 \leq \delta^2$, or within a rectangle defined by $[x_i-\delta, x_i+\delta] \times [y_i-\delta, y_i+\delta]$. In this case, implicit function interpolation can be formulated as the following optimization problem:
\begin{eqnarray}\label{min-zk} 
   \left\{\begin{array}{ccl}
      &{\displaystyle \min_{\cc_{d}, \, x_i,\,y_i}}  & 
        {\displaystyle \sum_{i=1}^{N}} (f(x_i,y_i,\cc_{d}))^2 \\
       & s.t. &   (x_i,y_i)\in B_i, \quad i=1,2,\cdots,N, \\
        &     & \|\cc_{d}\|_{2}^2=1.
   \end{array}\right.
\end{eqnarray}

In this case, if we find a real-coefficient polynomial of the desired degree, we can perturb it to obtain a rational-coefficient polynomial that passes through the given neighborhood $B_i$. In some practical problems, the given data $(x_i,y_i)$ may have random errors that follow a certain distribution (such as a normal distribution). The goal is to construct a polynomial curve $f(x,y)=0$ that approximately satisfies the equation.
$$
|f(x_i,y_i)|<\epsilon, \quad i=1,2,\cdots,N,
$$
where $\epsilon$ is a small positive number. The optimization problem corresponding to this type of problem is equivalent to adding $N$ linear inequalities of the following form to the constraints of problem \eqref{min-zk}:
$$
-\epsilon<{\cc_{d}^{}}^{T}\,\vv_{d}
=
c_0^{}+c_1^{} \cdot x+c_2^{}\cdot y+ c_{p-2}^{} \cdot x y^{d-1}+ +c_{p-1}^{} \cdot y^d<\epsilon.
$$

Now consider the case in problem \eqref{min-zk} where $B_i$ represents a rectangular neighborhood. By applying the Lagrange multiplier method, the condition for the optimum can be transformed into the following semi-algebraic decision problem:
\begin{eqnarray}
&&\left(\exists (x_1,y_1)\in B_1,\cdots, (x_N,y_N)\in B_N\right)
\wedge\, \exists \cc_d \in \RR^{p} \,
\left(\|\cc_d\|_2=1 \right),\nonumber \\
&&\left(A(x_1,y_1,\cdots,x_N,y_N)\right)_{p \times p} \cc_{d} =[0,0,\cdots,0]^T.
\label{semi-algebra-decision}
\end{eqnarray}

For this problem, applying Theorem \ref{main-theorem} yields the result of the lowest degree of polynomials that pass through all neighborhoods.
\begin{corollary}\quad
 For $N$ rectangles on the plane $B_i=(\underline{a_i},\overline{a_i})\times (\underline{b_i},\overline{b_i})\,(i=1,2,\cdots,N)$ and a positive integer $d$,
let
$$
V(B_1,B_2,\cdots,B_N)=\{f\in \mathbb{Z}[x,y]\, |\, \mbox{Zero}(f)\cap B_i\not=\emptyset\},
$$
where
$$
\mbox{Zero}(f)=\{(x,y)\in \mathbb{R}^2| f(x,y)=0\},
$$
and
$D_{d}: \mathbb{R}^{2N}\rightarrow R$ a polynomial function defined by
$$
D_{d}(x_1,y_1,\cdots,x_N,y_N)=\det(A^{[d]}(x_1,y_1,\cdots,x_N,y_N)).
$$
If $D_{d}$ is positive definite on the box $B_1\times B_2\times \cdots \times B_N\subset \mathbb{R}^{2N}$ then
\begin{equation}
\min ~ \{\mbox{\rm degree} ~(f) \, |\, ~f\in V(B_1,B_2,\cdots,B_N)\} ~\geq ~d+1.
\label{mindegf}
\end{equation}
\label{cor-mindegf}
\end{corollary}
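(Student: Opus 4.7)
The plan is to prove this by contradiction, invoking the forward direction ($\Rightarrow$) of Theorem~\ref{main-theorem} as a black box. Suppose, toward a contradiction, that \eqref{mindegf} fails, so there exists some $f\in V(B_1,B_2,\ldots,B_N)$ with $\mbox{\rm degree}(f)\leq d$. By the definition of $V(B_1,\ldots,B_N)$, the zero set of $f$ meets every $B_i$, so we may select a point $(x_i^{\ast},y_i^{\ast})\in B_i$ with $f(x_i^{\ast},y_i^{\ast})=0$ for each $i=1,2,\ldots,N$.

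These $N$ points, together with the polynomial $f$ of degree at most $d$, fit exactly into the hypothesis of Theorem~\ref{main-theorem}. Applying the $\Rightarrow$ direction of that theorem to the tuple $(x_1^{\ast},y_1^{\ast}),\ldots,(x_N^{\ast},y_N^{\ast})$ gives
$$
\det\bigl(A^{[d]}(x_1^{\ast},y_1^{\ast},\ldots,x_N^{\ast},y_N^{\ast})\bigr)=0,
$$
i.e.\ $D_{d}(x_1^{\ast},y_1^{\ast},\ldots,x_N^{\ast},y_N^{\ast})=0$. However, the hypothesis asserts that $D_d$ is positive definite on the box $B_1\times B_2\times\cdots\times B_N$, meaning $D_d$ is strictly positive at every point of the box, and in particular at $(x_1^{\ast},\ldots,y_N^{\ast})$. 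This contradiction rules out the existence of any such $f$ of degree $\leq d$, so the minimum degree attained in $V(B_1,\ldots,B_N)$ must be at least $d+1$.

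The whole argument is essentially a one-line reduction to Theorem~\ref{main-theorem}, so there is no real analytic obstacle. The only point requiring care is the precise reading of ``positive definite on the box'': I would interpret it as $D_d(\xx)>0$ for every $\xx$ in the (open) box $B_1\times\cdots\times B_N$, which is exactly what the argument above consumes. If instead the authors intend strict positivity on the closed box, the conclusion is only stronger and the identical reasoning goes through, since each selected $(x_i^{\ast},y_i^{\ast})$ lies in $B_i$ in either convention.
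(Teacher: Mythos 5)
Your proposal is correct and follows essentially the same route as the paper: argue by contradiction, pick a point of $\mbox{Zero}(f)$ in each $B_i$, and conclude that $D_d$ vanishes at that point of the box (the paper justifies this vanishing by inlining the positive-semidefiniteness/quadratic-form argument rather than citing Theorem~\ref{main-theorem} explicitly, but that is the same mechanism), contradicting the assumed positivity of $D_d$ on the box.
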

\begin{proof}\quad
 Since for any $\cc_{d} \in \RR^p$, 
we have
\begin{eqnarray*}
&&
\cc_d  \cdot A_d(x_1,y_1,\cdots,x_N,y_N) \cdot {\cc_d}^{T} \\
&=&\sum_{i=1}^N\left(\cc_d \cdot [1,x_i,y_i,\cdots, x_i^d,\cdots,y_i^d]^T(1,x_i,y_i,\cdots,x_i^d,\cdots, y_i^d) \cdot {\cc_d}^{T}\right),\\
&=&\sum_{i=1}^N([1,x_i,y_i,\cdots,x_i^d,\cdots,y_i^d] \cdot \cc_d)^2 ~~ \geq ~~ 0,
\end{eqnarray*}
the matrix $A^{[d]}$ is a $p \times p $ semi-definite matrix
and $D_{d}\geq 0$ for all $(x_1,y_1,\cdots,x_N,y_N)\in \mathbb{R}$.
Thus, if (\ref{mindegf}) is not valid, then there exists $[c_0,c_1,\cdots,c_{p-1}] 
\in \mathbb{R}^p \setminus\{[0,0,\cdots,0]^{T} \}$ such that
$$
f=c_0+c_1x+c_2y+\cdots+c_{p-1-d}x^d+\cdots+c_{p-1} y^d\in V(B_1,B_2,\cdots,B_N),
$$
that is,
$$
f(x_i,y_i)=c_0+c_1x_i+c_2y_i+\cdots+c_{p-1-d}x_i^d+\cdots+c_{p-1} y_i^d = 0, \quad i=1,2,\cdots, N,
$$
and therefore, for some $(x_i,y_i)\in B_i\ (i=1,2,\cdots,N)$, we have
$$
D_{d}(x_1,x_2,\cdots,x_N,y_N)=0.
$$
\end{proof}

According to Corollary \ref{akeq0}, when $N < {{d+2}\choose 2}$, for any $(x_1, y_1, \cdots, x_N, y_N) \in B_1 \times B_2 \times \cdots \times B_N$, we always have:
$$
D_{d}(x_1,y_1,\cdots,x_N,y_N)=\det(A^{[d]}(x_1,y_1,\cdots,x_N,y_N))= 0,
$$
which means that $D_d$ cannot be positive definite over $B_1 \times B_2 \times \cdots \times B_N$.
When $N \geq {{d+2}\choose 2}$, the determination of whether $D_d$ is positive definite over $B_1 \times B_2 \times \cdots \times B_N$, as stated in Corollary \ref{cor-mindegf}, can be transformed into the following optimization problem:
\begin{equation}
\min\{D_{d}(x_1,y_1,\cdots,x_N,y_N)\, |\,
\underline{a}_i\leq x_i\leq \overline{a}_i,
\underline{b}_i\leq y_i\leq \overline{b}_i,
i=1,2,\cdots,N \}.
\label{minr2n}
\end{equation}
Alternatively, it can be expressed as the following quantifier elimination problem:
\begin{equation}
\begin{array}{ll}
\exists (x_1,y_1,\cdots,x_N,y_N) \nonumber \\
\phantom{\exists} (\underline{a}_i\leq x_i\leq \overline{a}_i,
\underline{b}_i\leq y_i\leq \overline{b}_i, i=1,\cdots,N)
\wedge D_{d}(x_1,y_1,\cdots,x_N,y_N)=0.
\end{array}
\label{qe-r2n}
\end{equation}

Note that the number of terms in $D_{2}$ increases from $125865$ for $N=6$ to $782775$ for $N=7$.
To simplify $D_{d}$, we introduce the matrix $M_d$ of order $p \times N$ defined as follows.
\begin{definition}
Let $d$ be a positive integer, $p = {d+2 \choose 2}$, and $(x_1, y_1), (x_2, y_2), \cdots, (x_N, y_N)$ be $N$ points in the plane. 
The $p \times N$ matrix $M_d$ defined below is referred to as the $d$th order implicit function interpolation basis matrix for these $N$ points:
$$
M_d = M(x_1,y_1,x_2,y_2,\cdots,x_{N},y_{N}):=
 \left(
 \begin {array}{llcr}
 1&1&\cdots&1\\
 x_1&x_2&\cdots&x_N\\
 y_1&y_2&\cdots&y_N\\
 \vdots&\vdots&\ddots&\vdots\\
 x_1^d&x_2^d&\cdots&x_N^d\\
 x_1^{d-1}y_1&x_2^{d-1}y_2&\cdots&x_N^{d-1}y_N\\
 \vdots&\vdots&\ddots&\vdots\\
 y_1^d&y_2^d&\cdots&y_N^d
 \end {array}
 \right).
$$
\end{definition}
Given $N$ points, there is a connection between the $d$-th order implicit function interpolation basis matrix $M_d$ and the matrix $A^{[d]}$.
\begin{lemma}\quad
For any $N$ points $\{(x_i,y_i), i= 1 \cdots N \}\in \mathbb{R}^{2N}$, we have:
$$
A^{[d]}
=M_d \cdot M_d^T.
$$
n particular, when $N = p$, we also have:
$$
\det (A^{[d]})= \det (M_d ^2).
$$
\label{akbktbk}
\end{lemma}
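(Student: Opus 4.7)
The plan is to recognize $M_d$ as the column-stacking of the monomial evaluation vectors. By the definition of $\vv_d$ given earlier in the paper, the $i$th column of $M_d$ is exactly $\vv_d(x_i,y_i) = [1,x_i,y_i,\ldots,x_i^d,x_i^{d-1}y_i,\ldots,y_i^d]^T$. So I would write $M_d = [\vv_d(x_1,y_1) \mid \vv_d(x_2,y_2) \mid \cdots \mid \vv_d(x_N,y_N)]$ and then $M_d^T$ as the corresponding row stacking.

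From here the first identity is a one-line computation using the column-times-row expansion of a matrix product: $M_d \cdot M_d^T = \sum_{i=1}^N \vv_d(x_i,y_i)\,\vv_d(x_i,y_i)^T$. Each summand is precisely $A_i$ as given in Definition~\ref{def:1}, so the sum equals $A^{[d]} = \sum_{i=1}^N A_i$, yielding $A^{[d]} = M_d M_d^T$.

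For the second assertion, when $N = p$ the matrix $M_d$ is square of size $p \times p$. Then multiplicativity of determinants together with $\det(M_d^T) = \det(M_d)$ gives $\det(A^{[d]}) = \det(M_d M_d^T) = \det(M_d)\det(M_d^T) = (\det M_d)^2$, which is what the lemma claims (interpreting $\det(M_d^2)$ in the statement as $(\det M_d)^2$).

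There is essentially no obstacle here; the lemma is a direct bookkeeping statement, and the only care needed is making the indexing conventions (columns of $M_d$ versus the summation index $i$ in $A^{[d]}$) line up correctly. I would therefore present it as a short, self-contained calculation rather than invoking any heavier machinery.
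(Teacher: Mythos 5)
Your proposal is correct and follows essentially the same route as the paper: the paper verifies $A^{[d]} = M_d \cdot M_d^T$ by writing out the product entry-by-entry (with entries like $[x_i^{k}y_i^{l}]$ denoting sums over the points) and matching it against Definition~\ref{def:1}, which is exactly the computation your column-times-row expansion $M_d M_d^T = \sum_{i=1}^{N}\vv_d(x_i,y_i)\,\vv_d(x_i,y_i)^T = \sum_{i=1}^N A_i$ packages more compactly. Your explicit handling of the determinant claim via $\det(M_d M_d^T) = (\det M_d)^2$ when $N=p$ is the intended (and correct) reading of the lemma's second assertion.
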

\begin{proof}\quad

$$ M_d \cdot M_d^T= \left(
 \begin {array}{llcr}
 1&1&\cdots&1\\
 x_1&x_2&\cdots&x_N\\
 y_1&y_2&\cdots&y_N\\
 \vdots&\vdots&\ddots&\vdots\\
 x_1^d&x_2^d&\cdots&x_N^d\\
 x_1^{d-1}y_1&x_2^{d-1}y_2&\cdots&x_N^{d-1}y_N\\
 \vdots&\vdots&\ddots&\vdots\\
 y_1^d&y_2^d&\cdots&y_N^d
 \end {array}
 \right).
\left(
 \begin {array}{cccccccc}
 1&   x_1  & y_1& \cdots& x_1^d&   x_1^{d-1}y_1& \cdots & y_1^d   \\
 1&   x_2  & y_2& \cdots& x_2^d&   x_2^{d-1}y_2& \cdots & y_2^d \\
 \vdots & \vdots &\ddots& \vdots &\vdots & \ddots & \vdots \\
 1&   x_N  & y_N& \cdots& x_N^d&   x_N^{d-1}y_N& \cdots & y_N^d
 \end {array}
 \right),
$$

\begin{equation}
= \scalebox{0.9}{$
\left(\begin{array}{cccccccc}
N             &\left[x_i\right]      &\left[y_i\right]           &\cdots
&\left[x_i^d\right]        &\left[x_i^{d-1}y_i\right]      &\cdots&\left[y_i^d\right]         \\[5pt]

\left[x_i\right]         &\left[x_i\, x_i\right]   &\left[x_i\, y_i\right]        &\cdots
&\left[x_i\, x_i^{d}\right]    &\left[x_i\, x_i^{d-1}y_i\right]   &\cdots&\left[x_i\, y_i^d\right]      \\ [5pt]

\left[y_i\right]         &\left[y_i\, x_i\right]   &\left[y_i\, y_i\right]         &\cdots
&\left[y_i\, x_i^d\right]     &\left[y_i\, x_i^{d-1}y_i\right]      &\cdots&\left[y_i\, y_i^{d}\right] \\[5pt]

\vdots        &\vdots     &\vdots          &\ddots&\vdots         &\vdots              &\ddots&\vdots           \\[5pt]

\left[x_i^d\right]       &\left[x_i^{d}\, x_i\right]&\left[x_i^d\, y_i\right]      &\cdots
&\left[x_i^{d}\, x_i^d\right]     &\left[x_i^d\, x_i^{d-1}y_i\right]     &\cdots&\left[x_i^d\, y_i^d\right]      \\[5pt]

\left[x_i^{d-1}y_i\right]&\left[x_i^{d-1}y_i\, x_i\right] &\left[x_i^{d-1}y_i\, y_i\right]&\cdots
&\left[x_i^{d-1}y_i\, x_i^d\right]&\left[x_i^{d-1}y_i\, x_i^{d-1}y_i\right]   &\cdots&\left[x_i^{d-1}y_i\, y_i^d\right]\\[5pt]

\vdots        &\vdots     &\vdots          &\ddots&\cdots         &\vdots              &\ddots&\vdots              \\ [5pt]

\left[y_i^d\right]       &\left[y_i^d\, x_i \right] &\left[y_i^{d}\, y_i\right]     &\cdots&
\left[y_i^d\, x_i^d\right]   &\left[y_i^d\, x_1^{d-1}y_i\right]&\cdots&\left[y_1^{d}\, y_i^d\right]
\end{array}\right).
$} \\[7pt]
\label{bbt}
\end{equation}

\noindent Here, we use the notation $[x_i], [y_i], [x_iy_i], \ldots, [x_i^d], [x_i^{d-1}y_i],\ldots, [y_i^d]$ to record
\begin{align*}
[x_i]&=x_1+x_2+\cdots+x_N, \\
[y_i]&=y_1+y_2+\cdots+y_N, \\
[x_iy_i]&=x_1y_1+x_2y_2+\cdots+x_Ny_N,\\
&\cdots\\
[x_i^d]&=x_1^d+x_2^d+\cdots+x_N^d, \\
[x_i^{d-1}y_i]&=x_1^{d-1}y_1+x_2^{d-1}y_2+\cdots+x_N^{d-1}y_N,\\
&\cdots\\
[y_i^d]&=y_1^d+y_2^d+\cdots+y_N^d.
\end{align*}

Comparing these definitions with the definition of the matrix $A^{[d]}$ in Definition \ref{def:1}, we can observe that the matrix (\ref{bbt}) is exactly equal to the expanded form of $A^{[d]}$.
\end{proof}
\vspace{0.5cm}
\noindent The corollary below follows directly from Corollary~\ref{cor-mindegf}.
\begin{corollary}\quad
Consider rectangles $B_i=(\underline{a_i},\overline{a_i})\times (\underline{b_i},\overline{b_i})$ for $i=1,2,\cdots,N)$,  
and $\Delta_d$ a polynomial function defined on $\mathbb{R}^{2p}$ by
$$
\Delta_d=\det(M_d(x_1,y_1,\cdots,x_{p},y_{p})).
$$
If
$\Delta_d$ is positive definite or negative definite on the set $
B_{j_1}\times B_{j_2}\times \cdots \times B_{j_p}\subset \mathbb{R}^{2p}$
for at least one $ (j_1,j_2,\cdots,j_p)$, with $1\leq j_1<j_2<\cdots <j_p\leq 2N$, then
$$
\min ~ \{\mbox{\rm degree}(f) \, |\, f\in V(B_1,B_2,\cdots,B_N)\}\geq d+1.
$$
\label{cor-psd}
\end{corollary}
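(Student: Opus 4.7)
The plan is to deduce this corollary from Corollary~\ref{cor-mindegf} by restricting attention to the sub-collection of $p$ rectangles $B_{j_1},\ldots,B_{j_p}$ and invoking Lemma~\ref{akbktbk} to translate the hypothesis on $\Delta_d$ into the hypothesis required by Corollary~\ref{cor-mindegf}. The bridge is the identity, available whenever the number of points matches the template size $p$, that $\det(A^{[d]}) = (\det M_d)^2$.

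First I would write $\widetilde{D}_d$ for the determinant function $D_d$ of Corollary~\ref{cor-mindegf} computed for the particular sub-family $(B_{j_1},\ldots,B_{j_p})$ of exactly $p$ rectangles. For this sub-family the matrix $A^{[d]}$ is built from $p$ summands $A_i$ on a $p\times p$ space, so Lemma~\ref{akbktbk} gives
\[
\widetilde{D}_d(x_{j_1},y_{j_1},\ldots,x_{j_p},y_{j_p})
=\det\bigl(M_d(x_{j_1},y_{j_1},\ldots,x_{j_p},y_{j_p})\bigr)^2
=\Delta_d^{\,2}.
\]
Since $\Delta_d$ is assumed to be positive definite or negative definite (that is, pointwise nonvanishing of definite sign) on the connected open box $B_{j_1}\times\cdots\times B_{j_p}$, its square $\Delta_d^{\,2}$ is strictly positive there. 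Hence $\widetilde{D}_d$ is positive definite on $B_{j_1}\times\cdots\times B_{j_p}$, which is exactly the hypothesis of Corollary~\ref{cor-mindegf} applied to this sub-collection.

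Applying Corollary~\ref{cor-mindegf} therefore yields
\[
\min\{\mbox{degree}(f)\,|\,f\in V(B_{j_1},B_{j_2},\ldots,B_{j_p})\}\;\geq\;d+1.
\]
To finish I would note the obvious monotonicity $V(B_1,B_2,\ldots,B_N)\subseteq V(B_{j_1},B_{j_2},\ldots,B_{j_p})$: any polynomial whose zero set meets every $B_i$ for $i=1,\ldots,N$ certainly meets the selected $B_{j_k}$. Consequently the minimum degree over the larger index family is bounded below by the minimum over the sub-family, which gives the desired bound $\geq d+1$.

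I do not expect a substantial obstacle; the proof is essentially a repackaging. The one step that deserves care is the interpretation of ``positive definite or negative definite'' for the scalar function $\Delta_d$: continuity together with the connectedness of the open box makes these two cases together equivalent to $\Delta_d$ being nonvanishing, which is exactly what is needed so that $\Delta_d^{\,2}>0$ throughout the box and so that the reduction to Corollary~\ref{cor-mindegf} is valid.
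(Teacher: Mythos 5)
Your proposal is correct and follows essentially the paper's intended route: the paper offers no further argument beyond stating that the corollary follows directly from Corollary~\ref{cor-mindegf}, and your chain — restrict to the $p$ selected rectangles, use Lemma~\ref{akbktbk} to get $\widetilde{D}_d=\Delta_d^{\,2}>0$ on the sub-box, apply Corollary~\ref{cor-mindegf}, and conclude via $V(B_1,\ldots,B_N)\subseteq V(B_{j_1},\ldots,B_{j_p})$ — is exactly the reduction the authors have in mind.
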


The degree of the polynomial $\Delta_d$ is given by
$$
\mbox{\rm degree}(\Delta_d)=\sum_{j=1}^d j(j+1)=\frac{1}{3}\,d \left( d+1 \right)  \left( d+2 \right).
$$
The degrees with respect to $x_i$ and $y_i$ are
$$
\quad \mbox{\rm degree}(\Delta_d,x_i)=d, \quad \mbox{\rm degree}(\Delta_d, y_i)=d, \text{for all} i=1,2,\cdots,p.$$

The expanded form of $\Delta_d$ contains $p!$ monomials. For example, when $d=2$, the minimum value of $p$ is ${4+2\choose 2}=6$. Therefore, the total degree of the polynomial $\Delta_2$ is 8, and the number of monomials it contains is at least $p!$, which is 720.


\begin{example} \label{ex2}\;
The following data represents the approximate values of the average distance from the Sun (semi-major axis, which is the mean of perihelion and aphelion distances) and the orbital period of the six closest planets to the Sun in the solar system: Mercury, Venus, Earth, Mars, Jupiter, and Saturn. Each data point $(r,p)$ is represented by a rectangle on a plane, with the side length of the square being $0.0006$. Here is an example for the data of Mercury: $\mbox{Mer}:=[0.3868, 0.3874]\times[0.2405, 0,2411]$. The unit of $r$ is in astronomical units ($A.U.$), where
$1A.U.=149,597,870$ kilometers. The unit of $p$ is the Earth's orbital period, which is used as a reference for time.

\begin{equation}
\left.\begin{array}{cll}
&\mbox{Mercury):}&\mbox{\phantom{xx} Mer}=[0.3868, 0.3874]\times[0.2405, 0,2411],\\
&\mbox{Venus):}&\mbox{\phantom{xx} Ven}= [0.7230, 0.7236]\times[0.6149, 0.6155],\\
&\mbox{Earth:}&\mbox{\phantom{xx} Erd}=[1, 1]\times[1, 1],\\
&\mbox{Mars:}&\mbox{\phantom{xx} Mar}=[1.5234, 1.5240]\times[1.8805, 1.8811], \\
&\mbox{Jupiter:}&\mbox{\phantom{xx} Jup}=[5.2041, 5.2047]\times[11.8617, 11.8623], \\
&\mbox{Saturn:}&\mbox{\phantom{xx} Sat}=[9.5823, 9.5829]\times[29.4568, 29.4574].
\end{array}
\right\}
\label{no-quadratic}
\end{equation}
Determine whether there exists a quadratic curve on the plane passing through the six given rectangles.

First, we take the centers of the six rectangles as $(x_i^{\ast},y_i^{\ast})$, and represent each rectangle using $x_i=x_i^{\ast}+\Delta x_i$ and $y_i=y_i^{\ast}+\Delta y_i$.
\begin{eqnarray*}
&\mbox{Mer}:& x_1=\frac{3871}{10000}+\Delta x_1,  \quad  y_1=\frac{2408}{10000}+\Delta y_1, \\[10pt]
&\mbox{Ven}:& x_2=\frac{7233}{10000}+\Delta x_2,\quad  y_2=\frac{6152}{10000}+\Delta y_2, \\[10pt]
&\mbox{Erd}:& x_3=1,\quad y_3=1, \\[10pt]
&\mbox{Mar}:& x_4=\frac{15237}{10000}+\Delta x_4, \quad   y_4=\frac{18808}{10000}+\Delta y_4,\\[10pt]
&\mbox{Jup}:& x_5=\frac{52044}{10000}+\Delta x_5, \quad  y_5=\frac{11862}{10000}+\Delta y_5,\\[10pt]
&\mbox{Sat}:& x_6=\frac{95826}{10000}+\Delta x_6, \quad  y_6=\frac{294571}{10000}+\Delta y_6.
\end{eqnarray*}
Secondly, substitute $x_i,y_i$ to calculate the determinant below
$$
M_2=
 \left| \begin {array}{cccccc} 1&{x_1}&{y_1}&{{x_1}}^{2}&{x_1}\,{y_1}&{{y_1}}^{2}\\ \noalign{\medskip}1&{x_2}&{y_2}&{{x_2}}^{2}&{x_2}\,{y_2}&{{y_2}}^{2}\\
 \noalign{\medskip}1&{x_3}&{y_3}&{{x_3}}^{2}&{x_3}\,{y_3}&{{y_3}}^{2}\\
 \noalign{\medskip}1&{x_4}&{y_4}&{{x_4}}^{2}&{x_4}\,{y_4}&{{y_4}}^{2}\\
 \noalign{\medskip}1&{x_5}&{y_5}&{{x_5}}^{2}&{x_5}\,{y_5}&{{y_5}}^{2}\\
 \noalign{\medskip}1&{x_6}&{y_6}&{{x_6}}^{2}&{x_6}\,{y_6}&{{y_6}}^{2}
 \end {array} \right|.
$$
Then $M_2(\Delta x_1,\Delta y_1, \cdots, \Delta x_6,\Delta y_6)$ is a polynomial of degree 8 with 720 monomials:
$$
M_2={\frac{728327184304772538780278933931}{781250000000000000000000000000}}
+\sum_{j=1}^{8}H_k(\Delta x_1,\Delta y_1, \ldots, \Delta x_6,\Delta y_6),
$$
Here, $H_k(\Delta x_1,\Delta y_1, \ldots, \Delta x_6,\Delta y_6),(k=1,2,\ldots,8)$ represents a $k$th-degree homogeneous polynomial. For example:
\begin{align*}
H_1&=c_{11}\Delta x_1+c_{12}\Delta y_1+\ldots+c_{112}\Delta y_6,\;&\mbox{ ($12$ monomials)}\\
H_2&=c_{21}\Delta x_1^2+\ldots+c_{26}\Delta x_6^2+c_{27}\Delta y_1^2+\ldots+c_{212}\Delta y_6^2\\
&\phantom{xxxxxx}+c_{213}\Delta x_1y_1+c_{214}\Delta x_1y_2+\ldots+c_{248}\Delta x_6y_6, \;&\mbox{ ($48$ monomials)}
\end{align*}
where $c_{11},\ldots,c_{112}, c_{21},\ldots,c_{248}$ are specific rational numbers. For brevity, their exact values are not shown here.
It is easy to see that for any positive number $\delta > 0$ and variables $\Delta x_1, \Delta y_1, \cdots, \Delta x_5, \Delta y_5 \in [-\delta, \delta]$, the following inequalities hold:
\begin{align*}
H_1&\geq -(|c_{11}|+|c_{12}|+\ldots+|c_{112}|)\delta,\;\\
H_2&=-(|c_{21}|+\ldots+|c_{27}|+\ldots+|c_{213}|+\ldots+|c_{248}|)\delta^2.
\end{align*}
Similarly, for $H_3,\ldots,H_8$, when $\Delta x_1, \Delta y_1, \cdots, \Delta x_5, \Delta y_5 \in [-\delta, \delta]$, the following inequalities hold:
$$
H_k\geq -C_k\times \delta^k, \quad 3\leq k\leq 8,
$$
where $C_k$ is a positive number determined by the coefficients of the polynomial $H_k$. By substituting specific numerical values, we obtain the inequality:
\begin{eqnarray*}
M_2&\geq&
{\frac{728327184304772538780278933931}{781250000000000000000000000000}}
-{\frac {15486384648337660843877367483}{6250000000000000000000000}}\,{\delta}\\[10pt]
&&
-{\frac {4025428097433897230769192269}{125000000000000000000000}}\,{\delta}^{2}
-{\frac {2430088906604087412321393}{12500000000000000000}}\,{\delta}^{3}\\[10pt]
&&
-{\frac {264230612082325102439}{625000000000000}}\,{\delta}^{4}
-{\frac {155328292156034629}{500000000000}}\,{\delta}^{5}\\[10pt]
&&
-{\frac {2943856054691}{50000000}}\,\delta^6
-{\frac {11662287}{2500}}\,{\delta}^{7}
-120\,{\delta}^{8}:=k_2(\delta)
\label{b2-delta}
\end{eqnarray*}
It is easy to see that $k_2(0)=0.932258\cdots>0$, the smallest positive real root of $k_2(\delta)=0$ is
$0.000374\cdots$, and the largest negative real root is $-1.024959\cdots$.
This implies that when $-1.024960<\delta<0.000374$ 
$$M_2>k_2(\delta)>0$$
which means that for $(x_i,y_i)$, $1\leq i\leq 6$, within the following rectangle:
\begin{equation}
[x_i^{\ast}-\frac{36}{100000},x_i^{\ast}+\frac{36}{100000}]
\times
[y_i^{\ast}-\frac{36}{100000},y_i^{\ast}+\frac{36}{100000}]
\label{6boxes}
\end{equation}
where $(x_i^{\ast},y_i^{\ast})$ represents the center of each of the six small rectangles, the polynomial $M_2(x_1,y_1,\ldots,x_6,y_6)$ is positive definite.

Finally, it is verified that the square regions $(r,p)$ of the six planets given in \eqref{no-quadratic} (with side length $0.0003$) are all contained within the six rectangular regions described in \eqref{6boxes} (with side length $0.00036$ and identical centers). In other words, no quadratic curves in terms of $r$ and $p$ pass through the six square regions in \eqref{no-quadratic}.
\end{example}

\section{The sparsity aspect of approximate implicit interpolation}
\label{sparsity}

Consider a set of $N$ points $P_i=(x_i,y_i)$ on the two-dimensional plane $\mathbb{R}^2$  and a given positive integer $d$.
In the previous section, we learned that if $d>\lfloor \sqrt{2N}\rfloor$, there always exists an algebraic curve of degree $d$ passing through the given points. However, if $d<\lfloor \sqrt{2N}\rfloor$, there may not be such a curve passing through all of these points simultaneously.
In this section, we discuss how to find a polynomial curve $f(x,y)=0$ of degree at most $d$ such that the difference between these points and the curve, represented by $\sum_{i=1}^N f(x_i,y_i)^2$, is minimized.

To approach this problem, we assume that $f(x,y)$ IS expressed as:
$$
f(x,y)=c_0+c_1\, x+c_2\, y+\cdots+c_{p-d}\, x^k+\cdots+c_{p-1}\, y^d,
$$
where $c_0,c_1,\cdots, c_{p-1}$ are real numbers satisfying the condition:
$$ c_0^2+c_1^2+\cdots+ c_{p-1}^2=1.$$
To solve this optimization problem, we employ the Lagrange multiplier method. The problem can be formulated as follows:
\begin{eqnarray}\label{min-sparsity}
   \left\{\begin{array}{ccl}
      &{\displaystyle \min }  & 
        {\displaystyle g(c_0,c_1,\ldots,c_{p-1}):=\sum_{i=1}^{N}} (c_0+c_1x_i+c_2y_i+\cdots+c_{p-1}y_i^d)^2 \\
       & s.t. & c_0^2+c_1^2+\cdots+c_{p-1}^2=1. 
   \end{array}\right.
\end{eqnarray}
The Lagrangian function for this problem is defined as:
$$
\mathcal{L}:=\sum_{i=1}^N \left(c_0+c_1x_i+c_2y_i+\cdots+c_{p-1}y_i^d\right)^2
+\lambda \cdot (c_0^2+c_1^2+\cdots+ c_{p-1}).
$$
By calculating the partial derivatives of $\mathcal{L}$ with respect to $c_0,c_1,\cdots,c_{p-1}$, we obtain the first-order conditions for the optimization problem \eqref{min-sparsity}:
$$
\frac{\partial \mathcal{L}}{\partial c_0}=0, \quad
\frac{\partial \mathcal{L}}{\partial c_1}=0, \quad
\frac{\partial \mathcal{L}}{\partial c_2}=0, \quad
\cdots,
\quad
\frac{\partial \mathcal{L}}{\partial c_{p-1}}=0.
$$
These equations can be expressed in matrix form as:
\begin{equation}
\left(
\frac{\partial^2 \mathcal{L}}{\partial c_i\partial c_j}
\right)_{i,j=0,1,\cdots,p-1}
\left(
\begin{array}{l}
c_0\\c_1\\\vdots\\c_{p-1}
\end{array}
\right)
=
\left(
\begin{array}{c}
0\\0\\\vdots\\0
\end{array}
\right),
\label{eq-homo}
\end{equation}
where
$$
\frac{\partial^2 \mathcal{L}}{\partial c_i \partial c_j}\,= 
\left\{\begin{array}{ll}
\dfrac{\partial^2 g}{\partial c_i^2}+2 \lambda,&\mbox{if $j=i$},\\[7pt]
\dfrac{\partial^2 g}{\partial c_i \partial c_j}, &\mbox{if $j\not=i$},
\end{array}
\right.
\quad (i,j=0,1,\cdots,p-1).
$$
In this way, by substituting the right-hand sides of the two equations into the equation
 $$\det
\left(
\frac{\partial^2 \mathcal{L}}{\partial c_i\partial c_j}
\right) = 0$$
we obtain a polynomial equation in $\lambda$:
\begin{equation}
a_0+a_1\lambda+a_2\lambda^2+\cdots+a_{p-1}\lambda^{p-1}+2^{p}\lambda^{p}=0,
\label{charmx}
\end{equation}
which is actually the characteristic equation of the matrix
$$
M(x_1,y_1,\ldots,x_N,y_N)=\left(-\frac{1}{2}\frac{\partial^2 g}{\partial c_i\partial c_j}\right)
$$
Since the matrix $M$ is a real symmetric matrix, the equation \eqref{charmx} has at least one real root. After solving for $\lambda$, we substitute it into the homogeneous linear equation \eqref{eq-homo} to solve for the desired coefficients $c_0,c_1,\ldots,c_{p-1}$.

when $N<p-1$, the coefficients $a_0,\cdots,a_{p-1-N}$in \eqref{charmx} are zero,
resulting in the solution to \eqref{min-sparsity} forming a linear subspace of dimension $p-N$ in $\mathbb{R}^{p}$. The intersection of this linear space with the unit sphere, defined as:
$$
S^{p-1}:=\{(c_0,c_1,\cdots,c_{p-1})\,|\, c_0^2+c_1^2+\cdots+c_{p-1}^2=1\}
$$
is an infinite set.
Therefore, further exploration is required to find the sparse polynomial $f(x,y)$ that passes through the given points.
For instance, when considering the 10 points $P_1,P_2,\cdots, P_{10}$ as provided in Example~\ref{ex1}, any quartic curve 
$$
f(x,y)=c_0+c_1x+c_2y+\cdots+c_{10}x^4+c_{11}x^3y+\cdots+c_{14}y^4
$$
passing through these 10 points satisfies the equation:
\begin{equation}
(c_0,c_1,\cdots,c_9)^T= M_{10\times 5} (c_{10},c_{11},\cdots,c_{14})^T,
\label{c104}
\end{equation}
where
$$
M_{10\times 5}=
 \left( \begin {array}{clcrc}
 \scalebox{0.8}{$0$}&-{\frac{95}{47}}-{\frac {20\,\sqrt {2}
}{47}}&-{\frac{56}{47}}+{\frac {8\,\sqrt {2}}{47}}&{\frac{40}{141}}+{
\frac {16\,\sqrt {2}}{47}}&\scalebox{0.8}{$0$}\\
\noalign{\medskip}\scalebox{0.8}{$0$}&-{\frac {25\,\sqrt
{2}}{282}}-{\frac{10}{47}}&-{\frac{35}{141}}+{\frac {5\,\sqrt {2}}{141
}}&-{\frac{70}{141}}+{\frac {10\,\sqrt {2}}{141}}&\scalebox{0.8}{$0$}
\\
\noalign{\medskip}\scalebox{0.8}{$0$}&-{\frac {15\,\sqrt {2}}{94}}-{\frac{242}{141}}&
-{\frac{21}{47}}+{\frac {3\,\sqrt {2}}{47}}&-{\frac{136}{47}}+{\frac {
6\,\sqrt {2}}{47}}&\scalebox{0.8}{$0$}\\
\noalign{\medskip}\scalebox{0.8}{$0$}&-{\frac {119\,\sqrt {2}}{
282}}-{\frac{10}{47}}&{\frac{59}{141}}-{\frac {14\,\sqrt {2}}{47}}&{
\frac {10\,\sqrt {2}}{141}}+{\frac{8}{47}}&\scalebox{0.8}{$0$}\\
\noalign{\medskip}\scalebox{0.8}{$0$}&{
\frac {125\,\sqrt {2}}{141}}+{\frac{488}{141}}&{\frac{350}{141}}-{
\frac {50\,\sqrt {2}}{141}}&{\frac{277}{141}}-{\frac {100\,\sqrt {2}}{
141}}&\scalebox{0.8}{$0$}\\
\noalign{\medskip}-\frac{3}{2}&{\frac {119\,\sqrt {2}}{282}}+{\frac{
10}{47}}&-{\frac{200}{141}}+{\frac {14\,\sqrt {2}}{47}}&-{\frac {10\,
\sqrt {2}}{141}}-{\frac{8}{47}}&\scalebox{0.8}{$0$}\\
\noalign{\medskip}-{\frac{34}{9}}&
-{\frac {427\,\sqrt {2}}{423}}-{\frac{445}{141}}&-{\frac {623\,\sqrt {
2}}{141}}-{\frac{1252}{423}}&-{\frac {824\,\sqrt {2}}{423}}+{\frac{356
}{141}}&{\frac{77}{9}}\\
\noalign{\medskip}-\frac{5}{9}&-{\frac {35\,\sqrt {2}
}{423}}-{\frac{25}{47}}&-{\frac{380}{423}}-{\frac {35\,\sqrt {2}}{47}}
&-{\frac {160\,\sqrt {2}}{423}}+{\frac{20}{47}}&{\frac{10}{9}}
\\
\noalign{\medskip}\frac{3}{2}&{\frac {7\,\sqrt {2}}{47}}+{\frac{45}{47}}&{
\frac{40}{141}}+{\frac {63\,\sqrt {2}}{47}}&{\frac {32\,\sqrt {2}}{47}
}-{\frac{36}{47}}&-\scalebox{0.8}{$5$}\\
\noalign{\medskip}{\frac{17}{6}}&{\frac {133\,
\sqrt {2}}{141}}+{\frac{385}{141}}&{\frac{168}{47}}+{\frac {539\,
\sqrt {2}}{141}}&-{\frac{308}{141}}+{\frac {232\,\sqrt {2}}{141}}&-{
\frac{17}{3}}\end {array} \right).
$$
And therefore, the search of the sparse quartic polynomial $f(x,y)=0$
that passes the 10 points leads to an optimization problem as follows:
\begin{eqnarray}
&\min &||(c_0,c_1,c_2,\cdots,c_{14})||_0, \label{zero-norm-14}\\
&\mbox{\rm s.t.}& (c_0,c_1,\cdots,c_9)^T= M_{10\times 5} (c_{10},c_{11},\cdots,c_{14})^T, \nonumber\\
&&c_0^2+c_1^2+\cdots+c_{14}^2=1, \nonumber
\end{eqnarray}
where the zero-norm $||\cdot||_0$ represents the zero-norm of a vector (i.e., the number of its non-zero elements). This optimization problem aims to find a sparse quartic polynomial by minimizing the number of non-zero coefficients.

To solve this problem, we can employ symbolic computation software such as Maple. By using Maple, we can verify the following facts:

\begin{itemize}
  \item For any selection $j_1,j_2,\cdots,j_{11}$ with $0\leq j_1<j_2<\cdots<j_{11}\leq 14$ the equation system formed by (\ref{c104}) has a non-zero solution satisfying
  $$
  c_{j_1}=c_{j_2}=\cdots=c_{j_{11}}=0
  $$
There are ${15 \choose 11}=1365$ such selections.
  
  \item Among all ${15 \choose 10}=3003$ subsets $S\subset \{0,1,2,\cdots,14\}$ with $\#(S)=10$, there exist exactly two subsets $S_1,S_2$ that satisfy (\ref{c104}) and $\{c_j=0\,|\, j\in S\}$ with non-zero solutions $N\!Z$. They are:
  $$
  S_1=\{0, 1, 2, 3, 4, 6, 8, 11, 12, 13\},
  $$
  $$
  N\!Z_1=\{
  c_5 = \frac{1}{\sqrt{19}}, c_7 = -\frac{3}{\sqrt{19}}, c_9 = -\frac{2}{\sqrt{19}},
  c_{10} = \frac{2}{\sqrt{19}}, c_{14} = \frac{1}{\sqrt{19}}
  \}
  $$

  $$
  S_2=\{1, 3, 4, 6, 7, 8, 10, 11, 12, 13\},
  $$
  $$
  N\!Z_2=\{
  c_0 = \frac{5/2}{\sqrt{671}},
  c_2 = -\frac{51/4}{\sqrt{671}},
  c_5 = \frac{77/4}{\sqrt{671}},
  c_9 = -\frac{45/4}{\sqrt{671}},
  c_{14} = \frac{9/4}{\sqrt{671}}
  \}.
  $$
  The corresponding curves are given by the equations
  $$
  {y}^{2}-3\,{x}^{2}y-2\,{y}^{3}+2\,{x}^{4}+{y}^{4}=0,
  $$
  and
  $$
  \frac{5}{2}-{\frac {51}{4}}\,y+{\frac {77}{4}}\,{y}^{2}-{\frac {45}{4}}\,{y}^{3}+\frac{9}{4}\,{y}^{4}=0.
  $$
\end{itemize}

The previous quartic curve is consistent with the curve obtained by implicit function interpolation using 15 points in Example \ref{ex1}. The latter curve is actually composed of four lines:
$$
  (y-1)(y-2)(3y-1)(3y-5)=0.
  $$
When the degree $d$ is large, solving the optimization problem \eqref{c104} directly through exhaustive search becomes computationally challenging. In such cases, we can approximate the solution by solving the following convex optimization problem:
\begin{eqnarray}
&\min& \sum_{0 \leq i<j\leq 14} c_i^2c_j^2= \frac{1}{2}~(-c_0^4-c_1^4-\cdots-c_{14}^4),\label{convex}\\
&\mbox{\rm s.t.}& (c_0,\cdots,c_9)^T=M_{10\times 5}(c_{10},\cdots,c_{14})^T, \nonumber\\
&&c_0^2+c_1^2+\cdots+c_{14}^2=1. \nonumber
\end{eqnarray}

By formulating the problem in this way, we transform it into a convex optimization problem that can be efficiently solved using available optimization algorithms. The resulting solution provides an approximate sparse quartic polynomial that satisfies the given conditions. For some practical applications, obtaining an approximate solution with a certain level of accuracy within the allowed time is sufficient. For theoretical problems, an approximate solution can also serve as a foundation for further research.

To illustrate our approach and demonstrate its practicality and effectiveness in real-world scenarios, we further investigate the analysis of planetary motion data in Example \ref{ex1}.
\begin{example} \textbf{Analysis of the actual motion data of planets and asteroids in the solar system.}

Consider the data of the eight major planets, 23 minor planets, and 7 dwarf planets orbiting the solar system, including the semi-major axis lengths of their orbits and the orbital periods (data source: https://en.wikipedia.org/wiki/Planet). In the previous example (Example \ref{ex2}) of the previous section, data for the first 6 planets, which are closer to Earth, were provided. The data for Uranus, Neptune, minor planets, and dwarf planets are given in Table \ref{tab2}. The observational errors in the data, denoted by $r$, are within $\pm 0.001 AU$, and the errors in the orbital periods, denoted by $p$, are within $\pm 0.01$ years.

\begin{table}[!h]
\caption{Orbit data coordinates for Uranus, Neptune, 23 asteroids, and 7 dwarf planets.}{}
\label{tab2}
\begin{center}
\begin{tabular}{lrr||lrr} \hline
asteroid&$r$ (in {\it Au\/})&$p$ (in {\it yr\/})&asteroid&$r$ (in {\it Au\/})&$p$ (in {\it yr\/})\\ \hline
Uranus\/{\rm ($\ast\ast$)}&19.2184&84.0205&
Neptune\/{\rm ($\ast\ast$)}&30.1104&164.7693\\ \hline
Ceres\/{\rm ($\ast$)}&2.768&4.60&
Pallas&2.772&4.61\\ \hline
Juno&2.671&4.36&
Vesta&2.362&3.63\\ \hline
Astraea&1.574&1.97&
Hebe&2.426&3.78\\ \hline
Iris&2.385&3.68&
Flora&2.202&3.27\\ \hline
Metis&2.387&3.69&
Hygiea&3.1421&5.57\\ \hline
Parthenope&2.453&3.84&
Victoria&2.334&3.57\\ \hline
Egeria& 2.576&4.13&
Irene&2.150&3.16\\ \hline
Eunomia&2.643&4.30&
Psyche&2.921&4.99\\ \hline
Thetis&2.470&3.88&
Melpomene&2.296&3.48\\ \hline
Fortune&2.441&3.81&
Proserpina&2.656&4.33\\ \hline
Bellona&2.777&4.63&
Amphitrite&2.554&4.08\\ \hline
Leukothea&2.990&5.17&
Fides&2.641&4.29\\ \hline
Chiron\/{\rm ($\ast$)}&13.708&50.76&
Pluto\/{\rm ($\ast$)}&39.48&247.94\\ \hline
Haumea\/{\rm ($\ast$)}&43.218&284.12&
Makemake\/{\rm ($\ast$)}&45.715&309.09\\ \hline
Eris\/{\rm ($\ast$)}&67.781&558.04&
Sedna \/{\rm ($\ast$)}&506.8&11411 \\ \hline
\end{tabular}
\end{center}
\vspace{-2pt}

{\small
\hspace{50pt}
In the table, the symbol $*$ represents a dwarf planet, and the symbol $**$ represents a planet.\hfill}
\end{table}

Figure \ref{keplerlaw} presents the orbital data for planets in the solar system and some selected minor and dwarf planets shown in Table \ref{tab2}. The theoretical dimensions of each rectangle in the figure are $0.02\times 0.002$. For clarity, the actual display uses the {\tiny $\blacksquare$} symbol from the tiny size package in LaTeX, which is horizontally and vertically magnified by a factor of $2$ and $20$ respectively.

From the previous section's Example \ref{ex2}, we know that no quadratic curve passes through these rectangular neighborhoods simultaneously. Our goal is to determine whether there exist cubic curves that pass through the $6$ small rectangular neighborhoods mentioned in \eqref{no-quadratic}, as well as $32$ small rectangular neighborhoods centered around the data in Table \ref{tab2} with dimensions $0.002\times 0.01$.

If such cubic curves exist, we aim to find a curve with the fewest possible terms. The ideal scenario would involve re-discovering Kepler's Third Law using rigorous analytical methods based on this actual data.

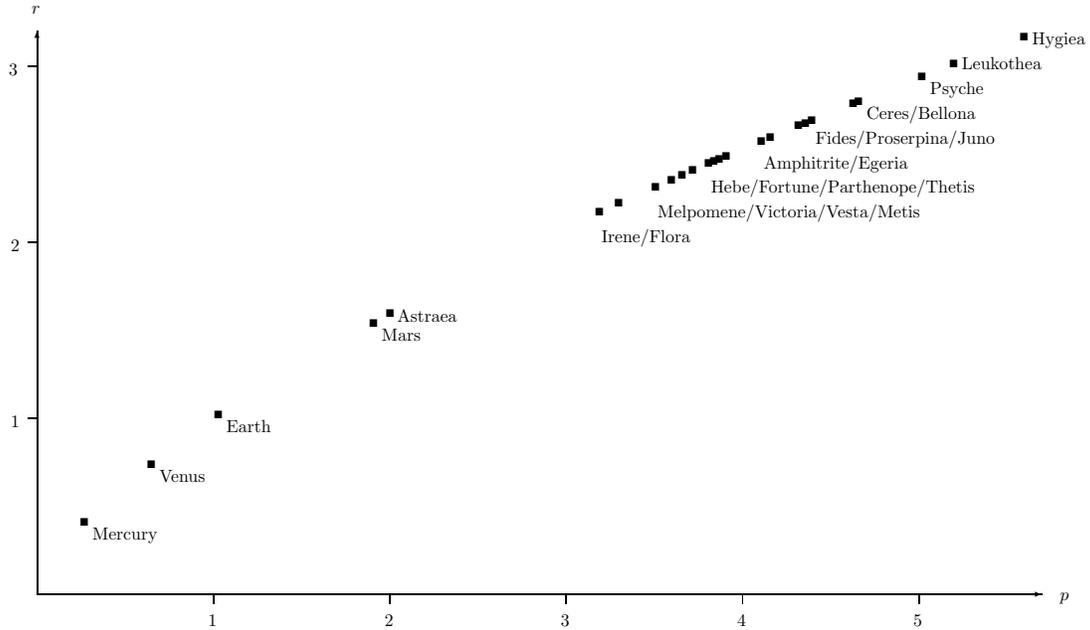
\begin{figure}
\centering
\scalebox{0.667}
{
\begin{picture}(600,330)(-20,-10)
\put(0,0){\vector(1,0){570}}
\put(0,0){\vector(0,1){320}}
\put(580,-3){$p$}
\put(-3,330){$r$}
\put(100,0){\line(0,-1){5}}
\put(97,-18){$1$}
\put(200,0){\line(0,-1){5}}
\put(197,-18){$2$}
\put(300,0){\line(0,-1){5}}
\put(297,-18){$3$}
\put(400,0){\line(0,-1){5}}
\put(397,-18){$4$}
\put(500,0){\line(0,-1){5}}
\put(497,-18){$5$}
\put(0,100){\line(-1,0){5}}
\put(-15,95){$1$}
\put(0,200){\line(-1,0){5}}
\put(-15,195){$2$}
\put(0,300){\line(-1,0){5}}
\put(-16,295){$3$}
\put(24,39){{\tiny $\blacksquare$}} \put(28,31){ Mercury}
\put(62,72){{\tiny $\blacksquare$}} \put(66,64){ Venus}
\put(100,100){{\tiny $\blacksquare$}} \put(104,92){ Earth}
\put(188,152){{\tiny $\blacksquare$}} \put(192,144){ Mars}
\put(197.44,157.38){{\tiny $\blacksquare$}} \put(201,155){ Astraea}
\put(316,215){{\tiny $\blacksquare$}} \put(320,200){Irene/Flora}
\put(327,220.2){{\tiny $\blacksquare$}}
\put(348,229.6){{\tiny $\blacksquare$}} \put(352,214){Melpomene/Victoria/Vesta/Metis}
\put(357,233.4){{\tiny $\blacksquare$}} 
\put(363,236.179){{\tiny $\blacksquare$}} 
\put(369,238.7){{\tiny $\blacksquare$}} 
\put(378,242.6){{\tiny $\blacksquare$}}  \put(382,228){Hebe/Fortune/Parthenope/Thetis}
\put(381,244.1){{\tiny $\blacksquare$}} 
\put(384,245.3){{\tiny $\blacksquare$}} 
\put(388,247){{\tiny $\blacksquare$}} 
\put(408,255.4){{\tiny $\blacksquare$}} \put(412,242){Amphitrite/Egeria}
\put(413,257.6){{\tiny $\blacksquare$}}
\put(429,264.1){{\tiny $\blacksquare$}}  \put(438,256){ Fides/Proserpina/Juno}
\put(433,265.6){{\tiny $\blacksquare$}}
\put(436.463,267.07){{\tiny $\blacksquare$}}
\put(460,276.75){{\tiny $\blacksquare$}} \put(467,270){ Ceres/Bellona}
\put(463,277.7){{\tiny $\blacksquare$}}
\put(499,292.1){{\tiny $\blacksquare$}}  \put(503,284){ Psyche}
\put(517,299){{\tiny $\blacksquare$}} \put(521,298){ Leukothea}
\put(557,314.21){{\tiny $\blacksquare$}} \put(561,312){ Hygiea}
\end{picture}
}
\caption{The orbital periods of some planets and asteroids in the solar system and the semi-major axes of their orbits.
The size of {\tiny $\blacksquare$} is adjusted to reflect the real error of the data.
}
\label{keplerlaw}
\end{figure}

To achieve this goal, our focus is on constructing cubic curves of the form:
\begin{equation}
k(r,p)=c_0+c_1r+c_2p+c_3r^2+c_4rp+c_5p^2+c_6r^3+c_7r^2p+c_8rp^2+c_9p^3=0,
\label{krp}
\end{equation}
such that they pass through the point $(1,1)$ and the 37 other small square regions. We aim to maximize the number of coefficients $c_0, c_1, \ldots, c_9 \in \mathbb{R}$ that are zero, while ensuring that at least one of $c_6, c_7, c_8, c_9$ is non-zero.

Without loss of generality, we assume that the first non-zero coefficient $c_i$ is positive, and the sum of squares of all coefficients is equal to 1, i.e.,
$$
c_0^2+c_1^2+\cdots+c_9^2=1. 
$$

To avoid accumulation of errors caused by rounding during calculations, we represent the 38 data points as rational points $P_i=(x_i,y_i)$, where $i=1,2,\ldots,38$, as shown in Table \ref{tab3}.

\begin{table}[!h]
\caption{Rational representation of $38$ data points.}{}
\label{tab3}
\begingroup
\setlength{\tabcolsep}{1pt} 
\renewcommand{\arraystretch}{1.3} 
$$\begin{array}{|c|c|c|c|c|c|c|c|c|c|}
\hline
\phantom{jj}P_1\phantom{hh} &\phantom{jj}P_2\phantom{hh}&\phantom{jj}P_3\phantom{hh} &\phantom{jj} P_4\phantom{hh} &\phantom{jj} P_5\phantom{hh} 
&\phantom{jj} P_6\phantom{hh} &\phantom{jj} P_7\phantom{hh} &\phantom{jj} P_8\phantom{hh} &\phantom{jj}  P_9\phantom{hh} 
&\phantom{j}  P_{10}\phantom{hh}  \\ \hline
 \frac{3871}{10000} & \frac{7233}{10000} & {\small 1} & \frac{15237}{10000} & \frac{13011}{2500} & \frac{47913}{5000} & \frac{24023}{1250} & \frac{3011}{100} & \frac{987}{25} & \frac{346}{125}   \\
\hline
 \frac{301}{1250} & \frac{769}{1250} & {\small 1} & \frac{2351}{1250} &\frac{5931}{500} & \frac{294571}{10000} & \frac{168041}{2000} &\frac{1647693}{10000} & \frac{12397}{50}& \frac{23}{5} \\
\hline
 P_{11} &  P_{12} &  P_{13} &
 P_{14}& P_{15} & P_{16} &  P_{17} &  P_{18} &  P_{19} &  P_{20}  \\
\hline
 \frac{693}{250} & \frac{2671}{1000} & \frac{1181}{500}& \frac{787}{500}& \frac{1213}{500} & \frac{477}{200} & \frac{1101}{500} & \frac{2387}{1000} &\frac{31421}{10000} & \frac{2453}{1000} \\
 \hline
 \frac{461}{100} & \frac{109}{25} & \frac{363}{100}  & \frac{197}{100}&\frac{189}{50}& \frac{92}{25}& \frac{327}{100}& \frac{369}{100}&\frac{557}{100}& \frac{96}{25}\\
\hline
P_{21} &  P_{22} &  P_{23} &  P_{24} &  P_{25} & P_{26} & P_{27} & P_{28} & P_{29} &P_{30}  \\
\hline
 \frac{1167}{500} & \frac{322}{125} & \frac{43}{20} & \frac{2643}{1000} &  \frac{2921}{1000} & \frac{247}{100} & \frac{287}{125} & \frac{2441}{1000} & \frac{332}{125} & \frac{2777}{1000} \\
\hline
\frac{357}{100}& \frac{413}{100}&\frac{79}{25}& \frac{43}{10} & \frac{499}{100} & \frac{97}{25} &\frac{87}{25}&\frac{381}{100}& \frac{433}{100}&\frac{463}{100}\\ \hline
P_{31} &P_{32} &P_{33} &P_{34} &P_{35} &P_{36} &P_{37}&P_{38}& &  \\\hline
\frac{1277}{500} &\frac{299}{100} & \frac{2641}{1000}& \frac{3427}{250} & \frac{67781}{1000}& \frac{2534}{5} &\frac{21609}{500} & \frac{9143}{200}& & \\ \hline
\frac{102}{25}& \frac{517}{100} & \frac{429}{100}&\frac{1269}{25}&\frac{13951}{25}&{\small 11411}& \frac{7103}{25}&\frac{30909}{100}& & \\ \hline
\end{array}$$
 \label{38points}
 \endgroup
\end{table}
 
It can be easily seen that if any four points $[x_1,y_1],\ldots,[x_4,y_4]$ from the set of 38 data points lie on a cubic curve given by $f(x, y) = c_0 + c_1x + c_2y + \cdots + c_6x^3 + \cdots + c_9y^3$, then the coefficients $c_0, c_1, \ldots, c_9$ satisfy the following system of linear equations: 
 
\begin{equation}
\left(\begin{array}{cccccccccc}
1&x_1&y_1&x_1^2&x_1y_1&y_1^2&x_1^3&x_1^2y_1&x_1y_1^2&y_1^3\\
1&x_2&y_2&x_2^2&x_2y_2&y_2^2&x_2^3&x_2^2y_2&x_2y_2^2&y_2^3\\
1&x_3&y_3&x_3^2&x_3y_3&y_3^2&x_3^3&x_3^2y_3&x_3y_3^2&y_3^3\\
1&x_4&y_4&x_4^2&x_4y_4&y_4^2&x_4^3&x_4^2y_4&x_4y_4^2&y_4^3
\end{array}\right)
\left(\begin{array}{c}
c_0\\
c_1\\
\vdots\\
c_9
\end{array}\right)
=
\left(\begin{array}{c}
0\\
0\\
\vdots\\
0
\end{array}\right).
\label{cubic4}
\end{equation}

First, we construct nine quadruples $G_1, G_2, \ldots, G_9$ using the 38 data points. Among them, $G_1$ and $G_2$ contain the eight major planets of the solar system, $G_2$ to $G_8$ include 23 minor planets and the dwarf planet Ceres (the largest known minor planet in the asteroid belt). $G_9$ consists of four dwarf planets.

In Table~\ref{tab3}, two dwarf planets (Eris and Sedna) are not included in the grouping and are reserved for validation purposes. The specific construction of the nine quadruples is as follows.

\begin{eqnarray*}
\mbox{\it Giant planets}&& G_1:=\{\mbox{\rm Mercury, Venus, Earth, Mars}\}; \\
&& G_2:=\{\mbox{\rm Jupiter, Saturn, Uranus, Neptune}\}; \\
\mbox{\it Asteroids }&& G_3:=\{\mbox{\rm Pallas, Juno. Vesta, Astraea}\};\\
&& G_4:=\{\mbox{\rm Hebe, Iris, Flora, Metis}\}; \\
&& G_5:=\{\mbox{\rm Hygiea, Parthenope, Victoria, Egeria}\};\\
&& G_6:=\{\mbox{\rm Irene, Eunomia, Psyche, Thetis}\};\\
&& G_7:=\{\mbox{\rm Melpomene, Fortune, Proserpina, Bellona}\};\\
&& G_8:=\{\mbox{\rm Amphitrite, Leukothea, Fides, Ceres}\};\\
\mbox{\it Dwarf planets}&& G_9:=\{\mbox{\rm Chiron, Pluto, Haumea, Makemake}\}.
\end{eqnarray*}
Next, for each group of four stars, we perform the following calculations:
Construct a set $S_{(10,5)}$ of five integers ${j_1,j_2,\cdots,j_5}$ satisfying $0\leq j_1<j_2<\cdots<j_5\leq 5$. 

Consider the set $J={j_1,j_2,\cdots,j_5}$ consisting of five distinct integers such that $0\leq j_1<j_2<\cdots<j_5\leq 9$, and the set ${6,7,8,9}$ is not a subset of $J$. We construct the set $S_{(10,5)}$ of all such combinations. The number of elements in $S_{(10,5)}$ is 246. For each such set of five integers (Integer Quintet) $J$, we define:
$$
\{j_6,j_7,\cdots,j_{10}\}=\{0,1,2,\cdots, 9\}\setminus J, ~~ j_6<j_7<\cdots<j_{10}.
$$
Then by substituting
$
c_{j_1}=c_{j_2}=c_{j_3}=c_{j_4}=c_{j_5}=0
$
into the equation system (\ref{cubic4}) we get four homogeneous equations with $5$ unknowns $j_6,j_7,\cdots,j_{10}$.
For each non-zero solution $c_{j_6},c_{j_7},\ldots,c_{j_{10}}$ of this derived system of equations, we obtain a non-zero solution $\phi(J):=(c_0,c_1,\cdots,c_9)$ for \eqref{cubic4}, where at least five of the coefficients are zero. For example, for the four-star group $G_1={\text{Mercury, Venus, Earth, Mars}}$ and the next two sets of five integers from $S_{(10,5)}$:
$$J_1=\{0,1,2,3,7\}, ~ J_2=\{[0, 3, 5, 6, 9]\}$$ 
the solutions to the system of equations \eqref{cubic4} are respectively:
$$
\phi(J_1)=
\scalebox{0.866}{
$(0, 0, 0, 0, -0.005115\cdots, -.701381\cdots, .712633\cdots, 0, -0.012353\cdots, 0.006218\cdots),$
}
$$
$$
\phi(J_2)=
\scalebox{0.866}{
$(0, 0.154246\cdots, -0.407426\cdots, 0, 0.582026\cdots, 0, 0, -0.621253\cdots, 0.292406\cdots, 0)$.
}
$$

It took only $0.514$ seconds 
to generate $\phi(J)$ for all $J\in S_{(10,5)}$.

For each $\phi(J)=(c_0,c_1,\cdots,c_9),$ we define
$$
\psi(J)=
\{i ~|~0\leq i\leq 9 ~\text{and}~ |c_i|<\frac{1}{100\sqrt{10}}\}.
$$
Here, we take $1/100\sqrt{10}$ as the threshold to set $c_i$ to zero if its absolute value is smaller than this value, based on the following consideration:
When the system of equations (\ref{cubic4}) and $J$ are generated using a random number generator, $(c_0, c_1, \ldots, c_9)$ is uniformly distributed on the 10-dimensional unit sphere, and the mean value of each $|c_i|$ is $1/\sqrt{10}$. The probability that the absolute value is smaller than $1/(100\sqrt{10})$ is approximately given by

$$
p=\frac{S_{10}(-\frac{1}{100\sqrt{10}},-\frac{1}{100\sqrt{10}})}{S_{10}}\approx\frac{\frac{2}{100\sqrt{10}}\times V_9}{S_{10}}=\frac{64}{7875\sqrt{10}\cdot \pi}
\approx
0.000818\cdots,
$$ 
where $S_{10}$ is the surface area of the 10-dimensional unit sphere and $V_9$ is the volume of the 9-dimensional unit sphere, given by
$$
S_{10}=\frac{2\pi^{10/2}}{\Gamma(\frac{10}{2})}=\frac{\pi^{5}}{12},\quad
V_9=\frac{\pi^{9/2}}{\Gamma(\frac{9}{2}+1)}=\frac{32\pi^4}{945},
$$
and $S_{10}(-\delta, \delta)$ is the domain consisting of points on the 10-dimensional unit sphere within a distance of $\delta$ from the equator. When $\delta \approx 0$, we have
$$
S_{10}(-\delta,\delta)\approx 2\delta\times V_{9}.
$$

It is evident that $J \subset \psi(J)$. When $J = \psi(J)$, the solutions of the linear system \eqref{cubic4} are all non-zero elements. When $J \subsetneq \psi(J)$, the solutions of the linear system \eqref{cubic4} contain new zero elements, i.e., $\#\psi(J)\geq 6$. Since we are looking for cubic curves with coefficients $c_0, c_1, \ldots, c_9$ that have as many zero elements as possible, for the set $S_{(10,5)}$ containing 246 elements, we first identify the subset of index sets $J$ in $\mathcal{S}'$ that satisfy the condition $J \subsetneq \psi(J)$, and then compute $\mathcal{S}'' = {\psi(J) | J \in \mathcal{S}'}$ and discuss them accordingly.

For $G_1$, computation shows that $\mathcal{S}'$ contains 57 elements, while $\mathcal{S}''$ contains 18 elements. Based on the number of zero coefficients, they can be further divided into three categories:
\begin{eqnarray}
&S_{(10,6)}=&\left\{\scalebox{0.9}{$
[0, 1, 2, 3, 4, 6], [0, 1, 2, 3, 5, 6], [0, 1, 2, 3, 5, 9], [0, 1, 2, 3, 6, 8], [0, 1, 2, 3, 6, 9]
$}\right.;
\nonumber
\\
&&\phantom{xxx}\left.\scalebox{0.9}{$
[0, 1, 2, 3, 4, 5], [0, 1, 2, 3, 4, 7], [0, 1, 2, 3, 4, 9], [0, 1, 2, 3, 5, 8]
$};\right.
\nonumber
\\
&&\phantom{xxx}\left.\scalebox{0.9}{$
[0, 1, 2, 3, 8, 9], [0, 1, 2, 4, 8, 9], [0, 1, 2, 7, 8, 9], [0, 1, 3, 7, 8, 9]
$}\right\};
\label{s106c}
\\
&S_{(10,7)}=&\{\scalebox{0.9}{$
[0, 1, 2, 3, 4, 7, 9], [0, 1, 2, 3, 4, 8, 9], [0, 1, 2, 3, 7, 8, 9], [0, 1, 2, 4, 7, 8, 9]
$}\};
\label{s107}
\\
&S_{(10,8)}=&\{\scalebox{0.9}{$
[0, 1, 2, 3, 4, 7, 8, 9]
$}\}.
\label{s108}
\end{eqnarray}

Note that each element $J=[j_1,j_2,\ldots,j_k]$ in $S_{(10,k)},(k=6,7,8)$ contains $k$ numbers from $0,1,\ldots,9$, arranged in ascending order. $S_{(10,6)}$ shows that among all cubic equations \eqref{krp} passing through the four rectangular regions of $G_1={\text{Mercury}, \text{Venus}, \text{Earth}, \text{Mars}}$, there are precisely 6 zero elements in the coefficients. Disregarding the specific numerical values of the non-zero coefficients, these equations must belong to one of the following 13 forms: 
 
\begin{eqnarray}
p^2+r^2p+rp^2+p^3,\quad rp+r^2p+rp^2+p^3,\quad rp+r^3+r^2p+rp^2,\phantom{xxx}\nonumber \\
rp+p^2+r^2p+rp^2,\quad rp+p^2+r^2p+p^3,\phantom{xxx} \nonumber \\
r^3+r^2p+rp^2+p^3,\quad p^2+r^3+rp^2+p^3,\quad p^2+r^3+r^2p+rp^2,\quad rp+r^3+r^2p+p^3,\phantom{xxx}\label{s106b-polynomials} \\
rp+p^2+r^3+r^2p,\quad r^2+p^2+r^3+r^2p,\quad r^2+rp+p^2+r^3,\quad p+rp+p^2+r^3.\phantom{xxx} \label{s106c-polynomials}
\end{eqnarray}

Among these 13 forms, the first 5 forms are reducible and can be factored out with either $r$ or $p$. According to the conclusion from Example \ref{ex1} in the previous section, these forms are actually impossible to occur. $S_{(10,7)}$ shows that among all cubic equations \eqref{krp} passing through $G_1$, there are exactly 7 zero elements in the coefficients, with 4 possible forms:

\begin{equation}
c_5p^2+c_6r^3+c_8rp^2,\quad c_5p^2+c_6r^3+c_7r^2p,\quad c_4rp+c_5p^2+c_6r^3,\quad c_3r^2+c_5p^2+c_6r^3.
\label{s107-polynomials}
\end{equation}

Finally, $S_{(10,8)}$ shows that among all cubic equations \eqref{krp} passing through $G_1$, there are exactly 8 zero elements in the coefficients, and they must have the form:
\begin{equation}
c_5p^2+c_6r^3=0.
\label{s108-polynomials}
\end{equation}

Let $\chi({G_1})$ be the set of all $13$ forms of cubic polynomials included in equations \eqref{s106b-polynomials}, \eqref{s106c-polynomials}, \eqref{s107-polynomials}, and \eqref{s108-polynomials}, namely:

\begin{eqnarray*}
&&\chi({G_1})=\left\{
r^3+r^2p+rp^2+p^3,\quad p^2+r^3+rp^2+p^3,\quad p^2+r^3+r^2p+rp^2,
\right.
\\
&&
\phantom{\chi({G_1})=xxx}rp+r^3+r^2p+p^3, \quad rp+p^2+r^3+r^2p,\quad r^2+p^2+r^3+r^2p,
\\
&&
\phantom{\chi({G_1})=xxx}r^2+rp+p^2+r^3,\quad p+rp+p^2+r^3,\quad p^2+r^3+rp^2,
\\
&&
\phantom{\chi({G_1})=xxx}\left.p^2+r^3+r^2p,\quad rp+p^2+r^3,\quad r^2+p^2+r^3,\quad p^2+r^3\right\}.
\end{eqnarray*}

We can now assert that among the cubic polynomials determined by the four small rectangular regions of $G_1$, if at least $6$ of their coefficients are zero, then these cubic polynomials, disregarding the specific values of the non-zero coefficients, must belong to one of the $13$ forms included in $\chi({G_1})$. Hence, we refer to $\chi({G_1})$ as the set of {\it sparse cubic curve types} supported by the four-star configuration $G_1$.
Currently, we hope that the results obtained from the data through implicit function interval interpolation, which corresponds formally to the polynomial $c_5p^2+c_6r^3=0$, are also included in the set $\chi({G_1})$.

Apply the same method to $G_2, G_3, \ldots, G_9$. We aim to extract the common supported {\it sparse cubic curve types/} from $\chi(G_i)$ for $1\leq i\leq 9$. The results reveal that:

$$
\chi(G_1)=\chi(G_2)=\cdots=\chi(G_9).
$$

Since the construction of $\chi(G_i)$ for a selected four-star configuration $G_i$ can be completed on average in $0.5$ seconds, we employ the Monte Carlo method to randomly generate four-star configurations $G_X$ among the $38$ data points listed in Table \ref{tab3}. Our expectation is to obtain a distinct sparse type $\chi(G_X)$ from $\chi(G_i)$ for $i=1,2,\ldots,9$. After conducting $1000$ experiments, we consistently obtained $\chi(G_X)=\chi(G_1)$. It is worth noting that there are a total of ${38\choose 4}=73,815$ possible four-star configurations that can be constructed from the $38$ data points. While it is feasible to perform a thorough computation by traversing through all possibilities, our objective is to find the simplest form of a cubic curve. Therefore, in this case, we directly compute the best results obtained by using the binomial expression:

$$
k_j(r,p)=c_5p^2+c_6r^3
$$
to construct the implicit functions passing through all $38$ data points $P_i=[x_i,y_i]$ listed on page \pageref{38points}.

Substituting $x_i$ and $y_i$ into the equation, we obtain:
\begin{equation}
\sum_{i=1}^{38}(y_i^2\cdot c_5+ x_i^3 c_6)^2=\alpha c_5^2+\beta c_5c_6+ \gamma c_6^2,
\label{kepler38}
\end{equation}
where
\begin{eqnarray*}
&&\alpha=33910002280102731124343238312183/(2\times 10^{15}),\\
&&\beta=42374075982683010624551482072423759/(125\times 10^{16}),\\
&&\gamma=4236065336850550592183712032020535827017/(25\times 10^{22}).
\end{eqnarray*}
 The values of $c_5,c_6$ that minimize $\alpha c_5^2+\beta c_5c_6+ \gamma c_6^2$ satisfy
\begin{center}
\begin{multline*}
-\frac{c_5}{c_6}=
\scalebox{0.9}{
${\displaystyle \frac{42374075982683010624551482072423759}{42387502850128413905429047890228750}}$
}\\
=
\scalebox{0.9}{
$\cfrac{1}{1+\cfrac{1}{3155+\cfrac{1}{1+\cfrac{1}{11+\cfrac{1}{75+\cdots}}}}}$}
\approx
\scalebox{0.9}{$-0.999683\cdots$},
\end{multline*}
\end{center}
and the approximate fractions are
$$
1, \quad \frac{3155}{3156}, \quad \frac{3156}{3157},
\quad \frac{37871}{37883}, \quad \frac{2843481}{2844382}, \cdots.
$$

Using these values, we obtain the following cubic curves:
$$
k_1(r,p)=p^2-r^3,\quad
k_2(r,p)=p^2-\frac{3155}{3156}r^3,\quad
k_3(r,p)=p^2-\frac{3156}{3157}r^3,\quad
\cdots
$$

Given that the errors associated with the data are smaller than $0.001$ AU for $r$ and $0.01$ years for $p$, a box $B_i$ can be assigned to each point $[x_i, y_i]$ as follows:
$$
B_i:=\{(x,y)\,|\,x_i-\frac{1}{1000}\leq x\leq x_i+\frac{1}{1000}, \, y_i-\frac{1}{100}\leq y\leq y_i+\frac{1}{100}\},
\quad i=1,2,\cdots,38.
$$

The following conclusions can be verified:
\begin{itemize}
\item The curve $k_1(r,p)= p^2-r^3$ intersects with each $B_i$, except for $i \neq 5,6,7,8,38$.
\item The curves $k_2(r,p)=0$ and $k_3(r,p)=0$ intersect with each $B_i$, except for $i=5, 6, 7, 8, 35, 36, 37, 38$. Note that $P_5,P_6,P_7,P_8$ correspond to the planets Jupiter, Saturn, Uranus, and Neptune in the solar system, while $P_{35},P_{36},P_{37},P_{38}$ represent the dwarf planets Haumea, Makemake, Eris, and Sedna, respectively.
\end{itemize}

It should be noted that the analysis of orbital data shows that the formulas $k_2(r,p)=0$ and $k_3(r,p)=0$ are more accurate than the Kepler's Third Law $k_1(r,p)=p^2-r^3$. This is because in reality, the precise formula relating $r$ and $p$ is given by:
$$
\frac{p^2}{r^3}=\frac{4\pi^2}{G(M_{\odot}+m)}, 
$$
where $M_{\odot}$ is the mass of the Sun, $m$ is the mass of the planet (asteroid or dwarf planet), and $G$ is the gravitational constant. The equation $p^2-r^3=0$ is only correct when $m<<M_{\odot}$.

\end{example}
\section{The uniqueness problem of algebraic curve interval interpolation with specified monomials and integer coefficients.}

In some proof problems, symbolic computation methods face difficulties in controlling the space complexity during intermediate calculations. In such cases, it is possible to instantiate some variables and use symbolic-numeric hybrid computations to obtain a large number of simplified solutions to the problem. Then, utilizing implicit function interpolation techniques such as algebraic curves or surfaces, the solutions of the instantiated problem can be integrated into solutions of the original problem. Often, through theoretical analysis, we can anticipate that the integrated solutions form a polynomial or surface with integer coefficients.

In this context, we not only aim to obtain sparse solutions with fewer terms, but also desire the coefficients of the resulting polynomials to be reasonably small. Additionally, we would like to determine the maximum range of coefficient values for which the solutions remain unique as integer-coefficient polynomials.

In this section, we discuss the following problem of implicit polynomial interval interpolation: constructing a polynomial curve that passes through a given set of vertical short lines on the plane. We focus on studying the uniqueness of integer-coefficient polynomial interpolation.

We have the following theorem.
\begin{theorem}\quad
Let $m_1(x,y)=x^{k_1}y^{l_1},m_2(x,y)=x^{k_2}y^{l_2},$ and $m_q(x,y)=x^{k_q}y^{l_q}$ be monomials of $x,y$ and
$
f(x,y)=c_1m_1(x,y)+c_2m_2(x,y)+\cdots+c_{q}m_q(x,y)
$
a polynomial that satisfies $|c_j|\leq 1, j=1,2,\cdots,q$ and passes through the line segments
$$
S_i=\{(x,y)|x=x_i>0, 0<y_i-\delta\leq y\leq y_i+\delta\}, i=1,2,\cdots,N.
$$
Then
$$
\sum_{i=1}^N (f(x_i,y_i))^2\leq
\delta^2\,q \cdot \sum_{i=1}^N h(x_i,y_i+\delta),
$$
where
$$
h(x,y)=\sum_{j=1}^{q} \left(\frac{\partial m_j(x,y)}{\partial y}\right)^2.
$$\label{estimation}
\end{theorem}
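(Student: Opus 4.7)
The plan is to exploit the fact that $f$ vanishes at some point $(x_i, y_i^*)$ in each segment $S_i$, turn the quantity $f(x_i,y_i)$ into a controlled variation in the $y$-direction, and then reduce the bound to the quantity $h$ via Cauchy--Schwarz plus a monotonicity argument valid because $x_i>0$ and $y_i-\delta>0$.

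First, I would fix $i$ and pick $y_i^{\ast}\in[y_i-\delta,\,y_i+\delta]$ with $f(x_i,y_i^{\ast})=0$, which exists by hypothesis. Writing
\[
f(x_i,y_i)=f(x_i,y_i)-f(x_i,y_i^{\ast})=\int_{y_i^{\ast}}^{y_i}\frac{\partial f}{\partial y}(x_i,t)\,dt,
\]
I would bound $|f(x_i,y_i)|\leq \delta\cdot \max_{t\in[y_i-\delta,y_i+\delta]}\bigl|\partial f/\partial y(x_i,t)\bigr|$, so that
\[
\bigl(f(x_i,y_i)\bigr)^2 \;\leq\; \delta^{2}\,\max_{t\in[y_i-\delta,y_i+\delta]}\Bigl(\tfrac{\partial f}{\partial y}(x_i,t)\Bigr)^{2}.
\]

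Next I would expand $\partial f/\partial y=\sum_{j=1}^{q}c_j\,\partial m_j/\partial y$ and apply Cauchy--Schwarz together with the hypothesis $|c_j|\leq 1$ (so $\sum c_j^{2}\leq q$):
\[
\Bigl(\tfrac{\partial f}{\partial y}(x_i,t)\Bigr)^{2} \;\leq\; \Bigl(\sum_{j=1}^{q}c_j^{2}\Bigr)\sum_{j=1}^{q}\Bigl(\tfrac{\partial m_j}{\partial y}(x_i,t)\Bigr)^{2}\;\leq\;q\cdot h(x_i,t).
\]
The last ingredient is to replace $h(x_i,t)$ by $h(x_i,y_i+\delta)$ uniformly in $t\in[y_i-\delta,y_i+\delta]$. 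Since $\partial m_j/\partial y=l_j x^{k_j}y^{l_j-1}$ and $x_i>0$, $y_i-\delta>0$, each $(\partial m_j/\partial y)^{2}$ is a nondecreasing nonnegative function of $y$ on $(0,\infty)$, hence so is $h(x_i,\cdot)$. This gives $\max_t h(x_i,t)=h(x_i,y_i+\delta)$, and combining with the previous inequalities yields $(f(x_i,y_i))^{2}\leq \delta^{2}\,q\,h(x_i,y_i+\delta)$. Summing over $i=1,\ldots,N$ delivers the claimed bound.

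The only delicate point is ensuring the monotonicity step is clean; it relies crucially on the positivity assumptions $x_i>0$ and $y_i-\delta>0$ stated in the theorem, which guarantee that all relevant monomial derivatives stay nonnegative and that $y\mapsto y^{l_j-1}$ is nondecreasing on the interval of integration (the cases $l_j=0$ and $l_j=1$ being trivial, since the corresponding contributions are zero or constant respectively). Apart from this, the proof is a straightforward chain of MVT + Cauchy--Schwarz + monotonicity, and I do not anticipate any other obstacle.
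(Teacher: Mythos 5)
Your proposal is correct and follows essentially the same route as the paper's proof: both exploit a zero of $f$ on each segment $S_i$, bound the resulting $y$-variation by a mean-value/fundamental-theorem estimate, apply Cauchy--Schwarz with $\sum_j c_j^2\leq q$, and use positivity of $x_i$ and $y_i-\delta$ to replace the intermediate point by $y_i+\delta$. The only difference is cosmetic: you apply the derivative bound to $f$ first and then Cauchy--Schwarz, while the paper applies Cauchy--Schwarz to the monomial differences first and then the mean value theorem term by term.
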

\begin{proof}\quad
 Assume that $f(x,y)$ passes the following points on each of the given line segments
at point $P_i: (x_i,y_i+\Delta y_i)$, where $|\Delta y_i|\leq \delta$ for $i=1,2,\cdots,N$.
Then, for $i=1,2,\cdots,N$ we have
\begin{eqnarray*}
(f(x_i,y_i))^2&=&(f(x_i,y_i)-f(x_i,y_i+\Delta y_i)^2 \\[10pt]
&=&{\displaystyle \sum_{j=1}^{q}\left(c_{j}(x_i^{k_j}\, y_i^{l_j}-x_i^{k_j}\,(y_i+\Delta y_i)^{l_j})\right)^2}\\[10pt]
&\leq&{\displaystyle (c_1^2+c_2^2+\cdots+c_{q}^2)\cdot
\sum_{j=1}^{q} x_i^{2k_j}\left(y_i^{l_j}-(y_i+\Delta y_i)^{l_j}\right)^2}\\[10pt]
&=&{\displaystyle q\cdot \sum_{j=1}^{q} x_i^{2k_j}
\left(\Delta y_i\cdot l_j\cdot (y_i+\theta_{l_j}\Delta y_i)^{l_j-1}\right)^2\,\quad (0< \theta_{l_j}< 1)}\\[10pt]
&\leq&{\displaystyle q\cdot \sum_{j=1}^{q} x_i^{2k_j}((l_j)\delta\cdot (y_i+\delta)^{l_j-1})^2}\\[10pt]
&=&{\displaystyle \delta^2\,q\cdot \sum_{j=1}^{q}l_j^2\,x_i^{2k_j}\left(y_i+\delta\right)^{2l_j-2}
=\delta^2\,q\cdot \sum_{j=1}^{q}\left(\frac{\partial m_j}{\partial y}(x_i,y_i+\delta)\right)^2},
\end{eqnarray*}
\end{proof}
\begin{corollary}\quad
Let $p=(d+1)(d+2)/2$. If a polynomial
$$
f(x,y)=c_0+c_1x^2+c_2y^2+\cdots+c_{p-d}x^{2d}+c_{p-d+1}x^{2d-2}y^2+\cdots+c_{p-1}y^{2d}, $$
with $|c_j|\leq 1$ for $j=0,1,\cdots, p$
passes through the line segments
$$
S_i=\{(x,y)|x=x_i>0, 0<y_i-\delta\leq y\leq y_i+\delta\}, i=1,2,\cdots,N.
$$
Then
$$
\sum_{i=1}^N (f(x_i,y_i))^2\leq
2(d+1)(d+2)\delta^2\cdot \sum_{i=1}^N h_d(x_i,y_i+\delta),
$$
where
$$
h_d(x,y)=y^2\sum_{l,j\geq 0,l+j\leq d-1}^{} (l+1)^2x^{4j}y^{4l}.
$$
In particular,
\begin{eqnarray*}
&& h_2(x,y)={y}^{2} \left( 1+{x}^{4}+4\,{y}^{4} \right),
~~ h_3(x,y)={y}^{2} \left( 1+{x}^{4}+4\,{y}^{4}+{x}^{8}+4\,{x}^{4}{y}^{4}+9\,{y}^{8} \right),\\
&& h_4(x,y)={y}^{2} \left( 1+{x}^{4}+4\,{y}^{4}+{x}^{8}+4\,{x}^{4}{y}^{4}+9\,{y}^{8}+
{x}^{12}+4\,{x}^{8}{y}^{4}+9\,{x}^{4}{y}^{8}+16\,{y}^{12} \right).
\end{eqnarray*}
\label{estimation-corollary}
\end{corollary}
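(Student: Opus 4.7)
The plan is to derive Corollary~\ref{estimation-corollary} as a direct specialization of Theorem~\ref{estimation}; once the monomials are indexed correctly, the result should reduce to careful bookkeeping and a shift of the summation indices.

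First, I would enumerate the monomials of $f$ as $m_{a,b}(x,y) = x^{2a}y^{2b}$ for $(a,b)$ in the index set $I_d = \{(a,b) : a,b\geq 0,\ a+b\leq d\}$. This set has exactly $\binom{d+2}{2} = (d+1)(d+2)/2 = p$ elements, matching the number of coefficients $c_0,c_1,\ldots,c_{p-1}$ displayed in the statement, so that $q = p$ in the notation of Theorem~\ref{estimation}. The polynomial $f$ then has the form $f = \sum_{(a,b)\in I_d} c_{a,b}\, m_{a,b}$, and by hypothesis $|c_{a,b}|\leq 1$ for each pair.

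Next, I would compute $\partial m_{a,b}/\partial y = 2b\, x^{2a} y^{2b-1}$, which vanishes whenever $b=0$. Squaring and summing,
\begin{equation*}
h(x,y) = \sum_{(a,b)\in I_d,\, b\geq 1} 4b^2\, x^{4a}\, y^{4b-2}.
\end{equation*}
Performing the substitution $l = b-1$, $j = a$, the constraints $a+b\leq d$ and $b\geq 1$ become $l,j\geq 0$ and $l+j\leq d-1$, giving
\begin{equation*}
h(x,y) = 4y^2 \sum_{\substack{l,j\geq 0 \\ l+j\leq d-1}} (l+1)^2\, x^{4j}\, y^{4l} = 4\, h_d(x,y).
\end{equation*}
Substituting $q = (d+1)(d+2)/2$ and $h = 4h_d$ into the bound of Theorem~\ref{estimation} then yields $\delta^2\, q\, h = 2(d+1)(d+2)\,\delta^2\, h_d$, which is the claimed inequality. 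The explicit formulas for $h_2, h_3, h_4$ follow by writing out the double sum for $d = 2,3,4$ and collecting terms.

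There is no substantive mathematical obstacle: the statement is essentially a clean rewrite of Theorem~\ref{estimation} in the even-power basis. The only step that requires attention is the index shift $l = b-1$, which both accounts for the restriction $b\geq 1$ in the derivative sum and is responsible for the upper bound $l+j\leq d-1$ in the definition of $h_d$. One must also track the factor $4$ coming from $(2b)^2$, since it is precisely this $4$ that combines with $q = (d+1)(d+2)/2$ to produce the constant $2(d+1)(d+2)$ in the final bound.
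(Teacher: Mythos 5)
Your proof is correct and is precisely the intended argument: the paper states this corollary without a separate proof because it is the specialization of Theorem~\ref{estimation} to the even-power monomials $x^{2a}y^{2b}$, $a+b\leq d$, and your index shift $l=b-1$ together with the factor $4$ from $(2b)^2$ reproduces both the constant $2(d+1)(d+2)$ and the stated $h_d$ (and the explicit $h_2,h_3,h_4$) exactly.
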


We investigate the problem of rational solutions for the following optimization problem: given points $P_i=(x_i,y_i)$, $i=1,2,\cdots,N$, a small positive number $\delta>0$, and a set of monomials $m_j=x^{k_j}y^{l_j}$, $j=1,2,\cdots,q$, the objective is to find rational numbers $(c_1,c_2,\cdots,c_q)\in [-1,1]^q\subset \mathbb{Q}^q$ that satisfy the following conditions:

\begin{itemize}
\item The polynomial curve $f(x,y)=c_1m_1+c_2m_2+\cdots+c_qm_q=0$ passes through the line segment $[x_i,x_i]\times[y_i-\delta,y_i+\delta]$, for $i=1,2,\ldots,N$.
\item $\max{|c_j|, j=1,2,\cdots,q}=1$.
\item The sum of squares of $f(x_i,y_i)$, for $i=1,2,\ldots,N$, is minimized.
\end{itemize}

It is clear that if $(c_1,c_2,\cdots,c_q)$ is a local minimum, then $(-c_1,-c_2,\cdots,-c_q)$ is also a local minimum,
and $\min\{|c_j|, j=1,2,\cdots,q\}=1$.
This optimization problem can be decomposed into $q$ individual optimization problems $(Q_j)$, for $j=1,2,\cdots,q$.

\begin{equation}
(Q_j): \left\{
\begin{array}{rcl}
\min&& {\displaystyle \sum_{i=1}^N \left(c_1x_i^{k_1}y_i^{l_1}+c_2x_i^{k_2}y_i^{l_2}+ \cdots+c_qx_i^{k_q}y_i^{l_q}\right)^2,} \\
\mbox{s.t.}&& 
(c_1x_i^{k_1}\underline{y_i}^{l_1}+c_2x_i^{k_2}\underline{y_i}^{l_2}+ \cdots+c_qx_i^{k_q}\underline{y_i}^{l_q})\\
&&\phantom{x}
\times (c_1x_i^{k_1}\overline{y_i}^{l_1}+c_2x_i^{k_2}\overline{y_i}^{l_2}+ \cdots+c_qx_i^{k_q}\overline{y_i}^{l_q})<0,\\
&&\phantom{here}(\underline{y_i}=y_i-\delta, \;\overline{y_i}=y_i+\delta),\\
&& -1<c_1,c_2,\cdots,c_q<+1, \quad c_j=1. 
\end{array}
\right.
\label{min-qj}
\end{equation}

Each sub-problem $(Q_j)$ can be transformed into a linear inequality problem with constraints $c_1, c_2, \ldots, c_q$, where the objective function is a quadratic function. Therefore, it can be converted into a problem of solving a system of linear inequalities

If the coordinates of points $P_1, P_2, \ldots, P_N$ are all rational numbers, it is possible to obtain a rational solution to the original problem using symbolic computation methods. Subsequently, asymptotic analysis can be applied to find the "optimal" integer coefficient solution to the original problem.

\begin{example}\quad
Given any real number $b\,(0<b\leq 1)$, let $E(b):=\{(x,y)|x^2+y^2/b^2=1\}$ be an ellipse with $b$ as minor-axis and the major-axis equal to 1, and $L(b)$ be the maximal perimeter of triangles inscribed in the ellipse. The aim is to find an explicit formula that connects $L$ and $b$.

For a given value of $b$, the maximum inscribed triangle can be approximated using numerical methods to obtain an approximate value of $L(b)$. Let $\underline{L}(b)$ and $\overline{L}(b)$ be the lower and upper bounds of $L(b)$, respectively. By employing numerical methods, we can determine the upper and lower bounds to make the interval $[\underline{L}(b), \overline{L}(b)]$ arbitrarily small.

For rational numbers $b_i=i/16, i=1,2,\cdots,16$, we calculate the upper and lower bound intervals such that $\overline{L}(b_i)-\underline{L}(b_i)\leq 1/10^{14}$. This results in a dataset containing $16$ very short vertical line segments (see Figure 1). The dataset $S_i: [b_i,b_i]\times [\underline{L_i},\overline{L_i}]$ is as follows:

\begin{align*}
&S_1:[1/16, 4.00391006468634],&&S_2: [1/8, 4.01568603234052],\\
&S_3: [3/16, 4.03546516762171],&&S_4: [1/4, 4.06347582517416],\\
&S_5: [5/16, 4.10003596888417],&&S_6: [3/8, 4.14554927249083],\\
&S_7: [7/16, 4.20049672260437], &&S_8: [1/2, 4.26542082282451],\\
&S_9: [9/16, 4.34089883607274],&&S_{10}: [5/8, 4.42750157636573],\\
&S_{11}: [11/16, 4.52573604652852],&&S_{12}: [3/4, 4.63597477802543],\\
&S_{13}: [13/16, 4.75838211258069],&&S_{14}: [7/8, 4.89285523313257],\\
&S_{15}: [15/16, 5.03899987143207],&&S_{16}: [1, 5.19615242270664]. 
\end{align*}

Here, for simplicity, we use $[b_i, L_i]$ to represent the line segment $[b_i, L_i-\delta] \times [b_i, L_i-\delta]$, where $\delta=0.5\times 10^{-14}$. For example, $S_1:=[1/16, 4.00391006468634]$ represents the interval
$$
[1/16, 4.00391006468634-0.5\times 10^{-14}]\times 
[1/16, 4.00391006468634+0.5\times 10^{-14}].
$$

The current objective is to reconstruct $f(L, b) = 0$ using the $16$ vertical short lines and implicit function interpolation. From the references \cite{chen2014finding, tang2016resultant}, it is known that the solution to this problem is given by:
\begin{equation}
f(b,L)=
 \left( 1-{b^2} \right)^2 {L^4}-8\, \left( 2-{b^2} \right)
 \left( 1-2\,{b^2} \right)  \left( 1+{b^2} \right) {L^2}-432\,{b
^4}=0.
\label{fbl}
\end{equation}

\begin{figure}
\centering
\begin{picture}(350,260)(-20,-20) 
\put(0,0){\vector(1,0){340}}
\put(0,0){\vector(0,1){250}}
\put(345,-5){$b$}
\put(-5,255){$L$}
\put(32,0){\line(0,-1){3}}
\put(64,0){\line(0,-1){3}}
\put(96,0){\line(0,-1){3}}
\put(128,0){\line(0,-1){3}}
\put(160,0){\line(0,-1){5}}\put(155,-15){{\small 0.5}}
\put(192,0){\line(0,-1){3}}
\put(224,0){\line(0,-1){3}}
\put(256,0){\line(0,-1){3}}
\put(288,0){\line(0,-1){3}}
\put(320,0){\line(0,-1){5}}\put(318,-15){{\small 1}}
\put(-0,100){\line(-1,0){5}}\put(-22,95){{\small 4.5}}
\put(-0,200){\line(-1,0){5}}\put(-22,195){{\small 5.0}}
\thicklines
\put(20,0.7820140){\line(0,1){2}} 
\put(40,3.1372080){\line(0,1){2}} 
\put(60,7.0930340){\line(0,1){2}} 
\put(80,12.6951660){\line(0,1){2}} 
\put(100,20.0071940){\line(0,1){2}} 
\put(120,29.1098560){\line(0,1){2}} 
\put(140,40.0993460){\line(0,1){2}} 
\put(160,53.0841660){\line(0,1){2}} 
\put(180,69.179768){\line(0,1){2}} 
\put(200,85.5003160){\line(0,1){2}} 
\put(220,105.1472100){\line(0,1){2}} 
\put(240,127.1949560){\line(0,1){2}} 
\put(260,151.6764240){\line(0,1){2}} 
\put(280,178.5710480){\line(0,1){2}} 
\put(300,207.7999760){\line(0,1){2}} 
\put(320,239.230486){\line(0,1){2}} 
\end{picture}
\caption{The 16 vertical segments determining a minimal polynomial $f(L,b)=0$.
The segment's length in this picture is enlarged $10^{9}$ times
for a better visualization effect.
}
\label{bl4}
\end{figure}
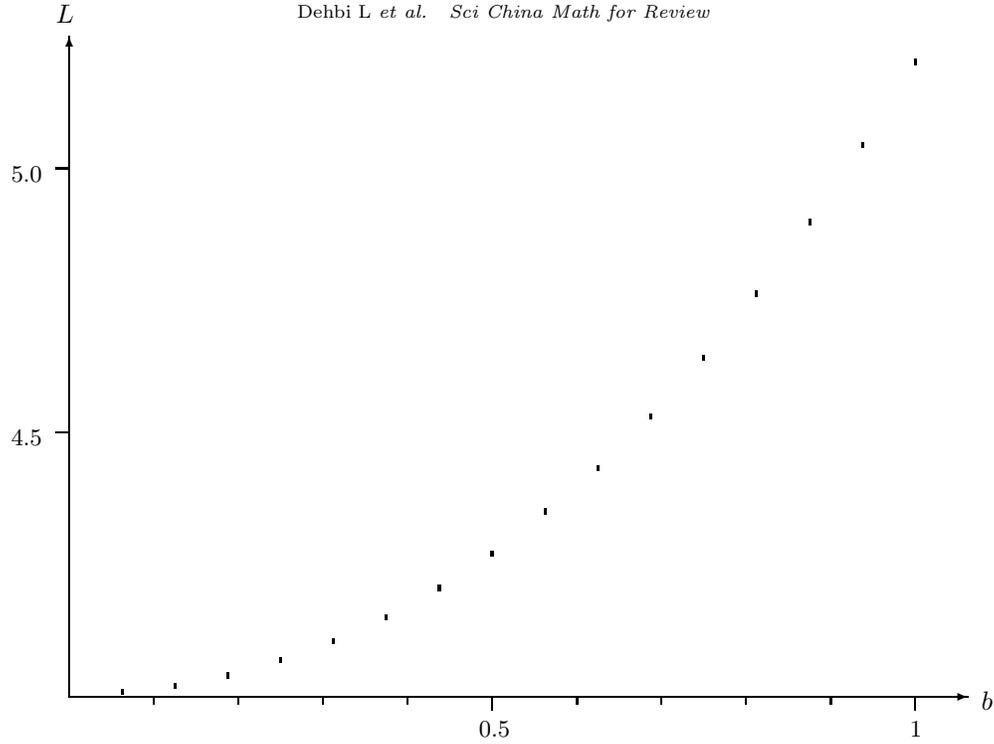

We will reconstruct this relationship through implicit function interval interpolation. The challenge lies in finding the most likely integer coefficient polynomial from the floating-point results.

Before interpolation, we first analyze the function between the perimeter $L$ of the maximum inscribed triangle and the semi-axis $b$ of the ellipse. This function can be represented by a polynomial equation $f(L,b)=0$.

To do so, let us assume that $ABC$ is the triangle inscribed in the ellipse $E(1,b): x^2+y^2/b^2=1$ with the maximum perimeter $L$. The coordinates of the vertices $A=(x_1,y_1)$, $B=(x_2,y_2)$, and $C=(x_3,y_3)$ of this maximum inscribed triangle are known. The side lengths of the triangle are denoted as $BC=u$, $CA=v$, and $AB=w$. We can establish the following equations:

\begin{eqnarray}
&f_0:=L-(u+v+w)=0, & f_4:=x_1^2+y_1^2/b^2-1=0,  \label{Lpqr}\\
&f_1:=(x_2-x_3)^2+(y_2-y_3)^2-u^2= 0,  & f_5:=x_2^2+y_2^2/b^2-1=0, \nonumber\\[10pt]
&f_2:=(x_3-x_1)^2+(y_3-y_1)^2-v^2 = 0,  & f_6:=x_3^2+y_3^2/b^2-1=0. \nonumber\\[10pt]
&f_3:=(x_1-x_2)^2+(y_1-y_2)^2-w^2 =0, & \nonumber
\end{eqnarray}
Then applying the Lagrange multiplier method we have
$$
\mathcal{L}=u+v+w-\sum_{j=1}^{6}k_j  f_j,
$$
and
\begin{eqnarray}
&&{\displaystyle f_7:=\frac{\partial \mathcal{L}}{\partial u}=0, \quad f_8:=\frac{\partial \mathcal{L}}{\partial v}=0,
\quad f_9:=\frac{\partial \mathcal{L}}{\partial w}=0,} \nonumber\\[10pt]
&&f_{10}:= {\displaystyle \frac{\partial \mathcal{L}}{\partial x_1}=0}, 
\quad f_{11}:= {\displaystyle \frac{\partial \mathcal{L}}{\partial x_2}=0},
\quad f_{12}:= {\displaystyle \frac{\partial \mathcal{L}}{\partial x_3}=0},\label{lpqr2}\\[10pt]
&&f_{13}:= {\displaystyle \frac{\partial \mathcal{L}}{\partial y_1}=0},
\quad f_{14}:= {\displaystyle \frac{\partial \mathcal{L}}{\partial y_2}=0},
\quad f_{15}:= {\displaystyle \frac{\partial \mathcal{L}}{\partial y_3}=0},\nonumber
\end{eqnarray}

The system of equations (\ref{Lpqr}) and (\ref{lpqr2}) consists of $16$ polynomial equations $f_i=0$ $(i=0,1,\ldots,15)$ and $17$ unknowns, namely:
$$
L,\; b,\; u,\; v,\; w, \; x_1,\;y_1,\;x_2,\;y_2;\;x_3,\; y_3,\;k_1,\;\cdots,\;k_6. 
$$ 

In theory, it is possible to eliminate $15$ variables from the system ${f_i=0, 1\leq i\leq q}$. For example, by eliminating all variables except $L$ and $b$, we obtain a polynomial in $L$ and $b$ denoted as $\mbox{\it Res/}(L, b) = 0$. Therefore, equation (\ref{fbl}) must be a factor of $\mbox{\it Res/}(L, b)$.

Previous work has shown that direct symbolic computation can eliminate $12$ variables from ${f_0, f_1, \ldots, f_{15}=0}$, specifically ${x_1, \ldots, y_6, k_1, \ldots, k_6}$, resulting in a system of $4$ equations only involving $L$, $b$, $u$, $v$, and $w$. In this reduced system, both $L$ and $b$ appear only with even powers. To save space, we denote these $4$ intermediate results as:

\begin{equation}
g_1(L^2,b^2,u,v,w)=0,\quad
g_2(L^2,b^2,u,v,w)=0,\quad
g_3(L^2,b^2,u,v,w)=0,\quad
g_4(L^2,b^2,u,v,w)=0.
\label{geqs}
\end{equation}

Further elimination of the variables $u$, $v$, and $w$ encounters storage space issues. To obtain the final result (\ref{fbl}), a combination of symbolic computation and sparse interpolation is required. In this paper, we attempt to use implicit function interval interpolation to construct a polynomial curve $F(x, y) = 0$ passing through the $16$ small line segments in Figure \ref{bl4}. We then prove that substituting $x=L$ and $y=b$ into $F(x, y)$ yields the solution to the original problem.

According to equation \eqref{geqs}, we are searching for curves of the following form that pass through the given 16 short lines:
\begin{equation}
F(x,y)=\,c_0+c_1x^2+c_2y^2+c_3 x^4+c_4x^2y^2+c_5y^4+\cdots
+c_{p-d-1}x^{2d}+\cdots+c_{p-1} y^{2d},
\label{pl2b2}
\end{equation}

Although the upper bound of $d$ can be estimated based on the degrees of $L,b,u,v,w$ in equation \eqref{geqs} and the form of the Sylvester resultant, the obtained value of $d$ is generally larger than the actual final result. Additionally, if the interpolation target polynomial has $d>5$, $F(x,y)$ contains 21 monomials:

$$
1;\quad
x^2,\, y^2; \quad
x^4,\, x^2y^2,\, y^4; \quad
x^6, \, x^4y^2, \, x^2y^4, \, y^6; 
$$
$$
x^8, \, x^6y^2, \, x^4y^4, \, x^2y^6, \, y^8;\quad
x^{10},\,x^8y^2,\,x^6y^4,\,x^4y^6,\,x^2y^8,\, y^{10},
$$
which exceeds the current number of data points ($16$). Therefore, we will try $d=2,3,4$ in ascending order. If $d=2$ is successful, we will then test the case of $d=1$. If $d=4$ is not successful, we will use numerical optimization to supplement more data and test the cases of $d=5, \ldots$. In the actual computations, we found that $d=2,3$ do not have polynomials of the form \eqref{pl2b2} that simultaneously pass through the 16 line segments in Figure \ref{bl4}, but there is such a polynomial curve for $d=4$. The specific calculation process is provided below.
According to Theorem~\ref{estimation} and Corollary~\ref{estimation-corollary}, we have the following result:
\begin{enumerate}
\item If a polynomial
  $
  f(x,y)=c_0+c_1x^2+c_2y^2+c_3x^4+c_4x^2y^2+c_5 y^4
  $
  passes the line segments $S_1,S_2,\cdots,S_{16}$ and $c_0, c_1, \cdots, c_5\in [-1,1]$, then
  \begin{equation}
  \sum_{i=1}^{16} f(x_i,y_i)^2 \leq r_2:=1.2740\times 10^{-21};
  \label{estimation-r2}
  \end{equation}
\item If
  $
  f(x,y)=c_0+c_1x^2+c_2y^2+c_3x^4+c_4x^2y^2+c_5^4+c_5y^4+c_6x^6+c_7x^4y^2+c_8x^2y^4+c_9y^6
  $
  passes the line segments $S_1,S_2,\cdots,S_{16}$ and $c_0, c_1, \cdots, c_9\in [-1,1]$ , then
  \begin{equation}
  \sum_{i=1}^{16} f(x_i,y_i)^2 \leq r_3:=2.2799\times 10^{-18};
  \label{estimation-r3}
  \end{equation}
\item If
  $
  f(x,y)=c_0+c_1x^2+c_2y^2+\cdots+c_{10}x^8+c_{11}x^6y^2+\cdots+c_{14}y^8
  $
  passes the line segments $S_1,S_2,\cdots,S_{16}$ and $c_0, c_1, \cdots, c_{14}\in [-1,1]$, then
  \begin{equation}
  \sum_{i=1}^{16} f(x_i,y_i)^2 \leq r_4:=3.2326\times 10^{-15}.
  \label{estimation-r4}
  \end{equation}
  \end{enumerate}
Let us denote the set of monomials as 
\begin{eqnarray*}
 &&{\cal P}_2:=\{1, x^2,y^2,x^4,x^2y^2, y^4\}, \\
&&{\cal P}_3:=\{1, x^2,y^2,x^4,x^2y^2, y^4, x^6, x^4y^2, x^2y^4, y^6\},\\
&&{\cal P}_4:=\{1, x^2,y^2,x^4,x^2y^2, y^4, x^6, x^4y^2, x^2y^4, y^6, x^8, x^6y^2, x^4y^4, x^2y^6, y^8\},
\end{eqnarray*}
and $P_i=(x_i,y_i)\,(i=1,2,\cdots,16)$ the barycenters of the segments $S_i$ as presented in Table \ref{tab4}.

\begin{table}[!h]
\centering
\caption{}{}The barycenters of the 16 segments
\label{tab4}
$$\begin{array}{|c|c|c||c|c|c|}
\hline
\phantom{PP}P_1\phantom{PP} &\phantom{x}1/16\phantom{x}&\phantom{x}4.003910064687\phantom{x} & \phantom{PP}P_9\phantom{PP}&\phantom{x}9/16\phantom{x}&\phantom{x} 4.340898836073\phantom{x}
\\ \hline
P_2&1/8 & 4.015686032341& P_{10}& 5/8& 4.427501576366
\\ \hline
P_3& 3/16& 4.035465167622   & P_{11}&11/16& 4.525736046529
\\ \hline
P_4 &1/4& 4.063475825175 &  P_{12}&3/4 & 4.635974778026
\\ \hline
P_5&5/16& 4.100035968885 & P_{13}&13/16& 4.758382112581
\\ \hline
 P_6&3/8& 4.145549272491   & P_{14}&7/8& 4.892855233133 \\ \hline
P_7&7/16& 4.200496722605 & P_{15}&15/16& 5.038999871433
\\ \hline
 P_8&1/2 & 4.265420822825 & P_{16}&{\small 1} & 5.196152422707 \\
\hline
\end{array}$$
\end{table}

For $d=2,3,4$ and $2\leq q\leq (d+1)(d+2)/2$, we define
$$
r(d,q):=\min_{|c_j|\leq 1, j=1,\cdots,q\atop
m_1, \cdots, m_q\in {\cal P}_d }
{\sum_{i=1}^{16} \left(\sum_{j=1}^q c_jm_j(x_i,y_i)\right)^2}.
$$
Then
$$
q_1 < q_2 \Longrightarrow r(d,q_1)\leq r(d,q_2).
$$

Let $r_2,r_3,r_4$ be the real numbers defined in \eqref{estimation-r2}, \eqref{estimation-r3}, and \eqref{estimation-r4}. It can be inferred from
$$
r(d,q)>\frac{q}{(d+1)(d+2)/2}\cdot r_d
$$
that there does not exist $m_1(x,y),\cdots, m_q(x,y)\in {\cal P}d$ and real numbers $c_1,\cdots, c_q$ such that
$
f(x,y)=c_1m_1(x,y)+\cdots+c_qm_q(x,y) (\not\equiv 0)
$
simultaneously passes through $S_1, S_2, \ldots, S{16}$.
By using symbolic computation, the following results can be obtained
$$
\scalebox{0.8}{$
\begin{array}{lclcl}
r(2,2)=0.4695\cdots,&&r(3,2)=0.2023,\cdots&&r(4,2)=0.1263\cdots,\\
r(2,3)=0.001129\cdots,&&r(3,3)=0.001129,\cdots&&r(4,3)=6.4239\times 10^{-4},\\
r(2,4)=1.0123\times 10^{-6},&&r(3,4)=1.2093\times 10^{-7},&&r(4,4)=1.2093\times 10^{-7},\\
r(2,5)=3.1611\times 10^{-8},&&r(3,5)=1.4273\times 10^{-8},&&r(4,5)=2.0731\times 10^{-9},\\
r(2,6)=3.9317\times 10^{-8},&&r(3,6)=4.2241\times 10^{-11},&& r(4,6)=6.9501\times 10^{-12},\\ \cline{1-1}
&& r(3,7)=1.5547\times 10^{-13}&& r(4,7)=1.5547\times 10^{-13},\\ \cline{5-5}
&&r(3,8)=6.6283\times 10^{-15},&&r(4,8)=4.939\times 10^{-30};\\
&&r(3,9)=5.0264\times 10^{-16},&&r(4,9)\leq 4.939\times 10^{-30}, \\ 
&&r(3,10)=1.2133\times 10^{-16},&&r(4,10)\leq 4.939\times 10^{-30},\\ \cline{3-3} 
&&&&r(4,11)\leq 4.939\times 10^{-30},\\ 
&&&&r(4,12)\leq 4.939\times 10^{-30},\\ 
&&&&r(4,13)\leq 4.939\times 10^{-30},\\ 
&&&&r(4,14)\leq 4.939\times 10^{-30},\\ 
&&&&r(4,15)\leq 4.939\times 10^{-30}.   
\end{array}
$}
$$
The above results show that when $d=2, 3$ and $d=4, q=8$, we have $r(d,q)>r_d$. This indicates that in order for the curve $F(x,y)=0$ in the form of equation \eqref{pl2b2} to pass through the 16 line segments, the degree of $F$ must be at least 8, and it must have at least 8 monomials.

Furthermore, we have:
$$
\min\sum_{i=1}^{16}\sum_{j=1}^{8}\left(c_jm_j(x_i,y_i)\right)^2
=\frac{\overbrace{249227\cdots358763}^{492 \text{digits}}}{\underbrace{754846\cdots\cdots047616}_{517 \text{digits}}}
\approx 3.301701\cdots\times10^{-26},
$$
and the coefficients of the monomials $
{x}^{4},{y}^{2},{y}^{4},{x}^{2}{y}^{2},{x}^{2}{y}^{4},{x}^{4}{y}^{2},
{x}^{4}{y}^{4},{x}^{6}{y}^{2},
$ can be expressed using continued fractions as follows:
$$
\scalebox{0.9}{$
c_1=1, \phantom{a large space here}
c_2=\cfrac{1}{26+\cfrac{1}{1+\cfrac{1}{3329579+\cdots}}},
\label{confracs}
$}
$$
$$
\scalebox{0.9}{$
c_3=-\cfrac{1}{431+\cfrac{1}{1+\cfrac{1}{208102+\cdots}}}, \phantom{space}
c_4=-\cfrac{1}{18+\cfrac{1}{14773286+\cfrac{1}{36+\cdots}}},
$}
$$
$$
\scalebox{0.9}{$
c_5=\cfrac{1}{216+\cfrac{1}{121148115+\cfrac{1}{10+\cdots}}}, \phantom{space}
c_6=-\cfrac{1}{18+\cfrac{1}{14773286+\cfrac{1}{1+\cdots}}},$}
$$
$$
\scalebox{0.9}{$
c_7=-\cfrac{1}{431+\cfrac{1}{1+\cfrac{1}{937254+\cdots}}}, \phantom{space} c_8 =\cfrac{1}{26+\cfrac{1}{1+\cfrac{1}{21212373+\cdots}}}.$}
$$
The corresponding polynomial curve is given by:
\begin{align}
f(x,y)=431.9999989\cdots{x}^{4}+ 16.00000013\cdots{y}^{2}
 -1.000000008\cdots{y}^{4}
 -23.99999985\cdots{x}^{2}{y}^{2} \nonumber\\
  +1.999999994\cdots{x}^{2}{y}^{4}
 -23.99999986\cdots{x}^{4}{y}^{2} -{x}^{4}{y}^{4}+ 15.99999998\cdots{x}^{6}{y}^{2},
\label{approximate-polynomial}
\end{align}

In this polynomial, the coefficients are multiplied by a certain factor so that ${\displaystyle \min_{1\leq j\leq q} |c_j| =1 }$, meaning the coefficient with the smallest absolute value is set to 1. Naturally, we conjecture that the corresponding integer coefficient polynomial for this polynomial is given by:

\begin{equation}
F(x,y)=432\,{x}^{4}+ 16\,{y}^{2}  -\,{y}^{4}-24\,{x}^{2}{y}^{2} +2\,{x}^{2}{y}^{4}
 -24\,{x}^{4}{y}^{2} -{x}^{4}{y}^{4}+ 16\,{x}^{6}{y}^{2}.
\label{polynomial88}
\end{equation}

There is a high possibility that this polynomial can also be expressed as a linear combination of the line segments $S_1, S_2, \ldots, S_{16}$. To confirm this conjecture, we only need to substitute the values $x_i=i/16, i=1,2,\ldots,16$ into \eqref{polynomial88} and verify, using numerical methods, that the signs of the resulting univariate polynomial equation $f(x_i,y)=0$ are opposite at the two endpoints of $S_i$. Computational calculations have indeed confirmed this.

Finally, based on the continued fraction representations of $c_1, c_2, \cdots, c_8$ (page \pageref{confracs}), we find a number $R$ such that, under the assumption that $c_1, c_2, \cdots, c_8 \in \mathbb{Z}$ and $|z_i| \leq R$, equation \eqref{polynomial88} is the unique eighth-degree curve that passes through $S_1, S_2, \cdots, S_{16}$. To do this, assume that $c_1, c_2, \cdots, c_8 \in \mathbb{Z}$ and $\max{|c_j|, j=1,2,\cdots,8}=R$. Then we have:

$$
\frac{|c_2|}{R}\in (\frac{1}{27}, \frac{1}{27}+\frac{1}{2427263793}),
$$
which implies,
$$
\frac{|c_2|}{R}-\frac{1}{27}\leq \frac{1}{2427263793},
$$
and further,
$$
\frac{1}{27R}\leq \frac{27|c_2|-R}{27R}\leq \frac{1}{2427263793}, ~~ \mbox{\it if\,}~ |c_2|/R\not=1/27.
$$
This means
$$ 
R\geq 89898659.
$$

After applying the same procedure to several other continued fractions, we have
\begin{align*}
R\geq \max\left\{89900495, 531838299/2, 130839964313/5, \right.\phantom{x}&\\
\phantom{xxx}\left. 265919167, 404894159, 572734097\right\}&\,>1.3083\times 10^{11},
\end{align*}
This indicates that under the condition $c_j<1.3088\times 10^{11},(j=1,2,\cdots,8)$, the polynomial \eqref{polynomial88} is the unique algebraic curve with integer coefficients of degree not exceeding $8$ that simultaneously passes through $S_1,S_2,\ldots,S_{16}$.

Note that the solutions we obtain using interval implicit function interpolation may only be a subset of the solutions to the original system of equations. Adding these solutions to the original system ${f_0, f_1, \ldots, f_{15}}$ can help us find additional solutions to the original system or prove that $F(L, b) = 0$ is the unique solution to the original system.

\end{example}
\section{Conclusion}
\label{conc}

This article discusses the problem of interval interpolation for planar algebraic curves. The input consists of several rectangular regions and vertical short lines on the plane. We discuss the solution methods for the following three problems and describe the solving process using several meaningful examples: (1) By simultaneously determining the lowest degree of the curves in the given small neighborhoods and constructing algorithms for curve construction, we transform the problem of determining the lowest degree into a positive-definiteness determination problem on a rectangular region in a high-dimensional space of multivariate polynomials. (2) By simultaneously constructing sparse solutions for algebraic curves in the given small neighborhoods, we study the analysis of observational data for the orbits of 38 celestial bodies in the solar system, including planets, asteroids, and dwarf planets. From this analysis, we rediscover Kepler's Third Law.
(3) By simultaneously constructing algorithms for integral coefficient algebraic curves in the given small neighborhoods, we explain the uniqueness analysis method for integral coefficient polynomial curves using an interesting set problem as an example.

The methods constructed in this article, as symbolic-numeric hybrid methods, help overcome the issue of intermediate process inflation that may occur in pure symbolic computation methods. They also assist in finding possible invariants and useful empirical formulas in data with errors. These methods are suitable for analyzing datasets obtained from practical observations with large sample sizes but limited accuracy (e.g., Example \ref{ex2} in this article), as well as handling datasets with high precision and small sample sizes generated for theoretical research problems (e.g., Example \ref{ex3}) or data consisting of accurate but non-rational numbers proposed in pure theoretical research (e.g., Example \ref{ex1}). The methods presented in this article can also be applied to algebraic surface interpolation problems in small neighborhoods in high-dimensional spaces.


\Acknowledgements{This work was supported by the National Natural Science Foundation of China (Grant Nos. 2171159 and 12071282).}

\end{document}